\numberwithin{figure}{section}
\numberwithin{table}{section}
\numberwithin{equation}{section}
\newcommand{\E}{\mathbb E}
\newcommand{\cF}{\mathcal F}
\newcommand{\cA}{\mathcal A}
\newcommand{\cH}{\mathcal H}
\newcommand{\cL}{\mathcal L}
\newcommand{\Z}{\mathbb Z}
\newtheoremstyle{mythm}{18pt}{0pt}{\itshape}{}{\bfseries}{.}{12pt}{}
\newtheoremstyle{mydefn}{18pt}{0pt}{}{}{\bfseries}{.}{12pt}{}
\theoremstyle{mythm}
\newtheorem{theorem}{Theorem}[section]
\newtheorem{lemma}[theorem]{Lemma}
\newtheorem{corollary}[theorem]{Corollary}
\theoremstyle{mydefn}
\newtheorem{remark}[theorem]{Remark}
\newtheorem{definition}[theorem]{Definition}
\def\Pr{\mathbb{P}} 
\newcommand{\G}{\Gamma}
\def\L{\lambda}
\def\Ref#1{(\ref{#1})}
\newcommand{\eqs}{\begin{eqnarray*}}
\newcommand{\ens}{\end{eqnarray*}}
\def\a{{\alpha}}
\def\b{{\beta}}
\def\d{\delta}
\newcommand{\Lb}{\bm{\lambda}}
\newcommand{\beas}{\begin{eqnarray*}}
\newcommand{\enas}{\end{eqnarray*}}
\newcommand{\eqa}{\begin{eqnarray}}
\newcommand{\ena}{\end{eqnarray}}
\newcommand{\eq}{\begin{equation}}
\newcommand{\en}{\end{equation}}
\def\D{\Delta}
\def\t{\tau}
\def\ignore#1{}
\def\Po{{\rm Pn}}
\def\L{\lambda}
\def\resm{{(m)}}
\def\resone{{(1)}}
\renewcommand\theequation{\thesection.\@arabic\c@equation}
\def\ind{\bm{1}}  
\def\E{\mathbb{E}} 
\def\Pr{\mathbb{P}} 
\def\Z{\mathbb{Z}} 
\def\zxab{Z_{\xi+\d_\a + \d_\b}^\resm(t)}
\def\zxabt{Z_{\xi+\d_\a + \d_\b}^\resm}
\def\zx{Z_{\xi}^\resm(t)}
\def\zxt{Z_{\xi}^\resm}
\def\zxa{Z_{\xi+\d_\a}^\resm(t)}
\def\zxat{{Z_{\xi+\d_\a}^\resm}}
\def\zxb{Z_{\xi+ \d_\b}^\resm(t)}
\def\zxaut{Z_{\xi + \d_\a - \d_U}^\resm}
\def\zxu{Z_{\xi - \d_U}^\resm(t)}
\def\zxut{Z_{\xi - \d_U}^\resm}
\def\poml{\Po^\resm(\Lb)}
\def\dot{(\cdot)}
\def\Ref#1{(\ref{#1})}
\begin{document}
\parindent 0cm
\parskip .5cm


\title{Conditional Poisson process approximation}

\author{H. L. Gan\footnote{ Department of Mathematics,
Washington University in St.~Louis,
One Brookings Drive,
St. Louis, MO 63130, USA. E-mail: hgan@math.wustl.edu}
}    
\date{\today}
\maketitle

\begin{abstract} 
Point processes are an essential tool when we are interested in where in time or space events occur. The basic starting point for point processes is usually the Poisson process. Over the years, Stein's method has been developed with a great deal of success for Poisson point process approximation. When studying rare events though, typically one only begins modelling after the occurrence of such an event. As a result, a point process that is conditional upon at least one atom, is arguably more appropriate in certain applications. In this paper, we develop Stein's method for conditional Poisson point process approximation, and closely examine what sort of difficulties that this conditioning entails. By utilising a characterising immigration-death process, we calculate bounds for the Stein factors.
\end{abstract}

\vskip12pt \noindent\textit {Keywords and phrases\/}: Stein's method, Poisson processes approximation, Stein's factors.

\vskip12pt \noindent\textit {AMS 2010 Subject Classification\/}: Primary 60F05; Secondary 60G55, 60E15, 60J27. 

\section{Introduction}

In Gan \& Xia~\cite{GX15}, Stein's method for the compound Poisson (and the special cases of Poisson and negative binomial) distribution conditioned upon being greater than $m$ was formulated. This work was initially motivated for the modelling of extreme events such as earthquakes, where the incidence of one extreme event would often lead to many more, hence an understanding of the conditional distribution is often of significant importance. In this paper, we will aim to extend conditional random variable approximation to conditional random process approximation.

To give some motivation for the following work, we introduce the example of Hawkes processes, a class of processes commonly used in financial literature. For a recent survey see Laub, Taimre \& Pollett~\cite{LTP2015}. A Hawkes point process is a point process in time where the incidence rate is in some sense `excited' by other recent points in the process. Intuitively, this is a natural choice of model for earthquakes as given one earthquake has occured, numerous aftershocks typically occur. If we were to approximate such a process with a simple distribution like Poisson, given that there should be a large mass at 0, this approximation would likely be inaccurate. In contrast to such an approach, if we were instead to focus on approximating the distribution given we know that one incident has occurred, we may be able to achieve an accurate approximation. In the case of a Hawkes process, if we know the base/unexcited incidence rate, the probability of zero events occurring is known, and hence understanding the conditional distribution would lead directly to understanding the complete distribution.

Compound Poisson point process approximation theory using Stein's method was first developed by Barbour \& M{\aa}nsson~\cite{BM02}. However, there are many technical difficulties with the approach and the utility of the results are somewhat limited due to the great generality of the compound Poisson distribution. 
In contrast to this, Poisson point process approximation via Stein's method has been far more successful, and as a result, we shall therefore focus on formulating conditional Poisson point process approximation theory.

In this paper we will let $\G$ denote a locally compact complete separable metric space, and $\mathcal{H}$ denote the space of all locally finite point measures on $\G$.

Stein's method for Poisson point process approximation was initiated by Barbour~\cite{barbour88} and Barbour \& Brown~\cite{BB92} as a generalisation of the Stein-Chen method by Chen~\cite{chen75}, and was later refined by Brown, Weinberg \& Xia~\cite{BWX00}, Xia~\cite{xia05} and Xia \& Schuhmacher~\cite{SX08}. The general approach is to utilise a generator for which the associated stationary distribution is a Poisson point process with intensity measure $\Lb$, denoted by $\Po(\Lb)$, and then to use suitable techniques involving couplings to find the relevant bounds. For a given configuration $\xi \in \mathcal{H}$, define the operator $\cA$ on a suitably rich family of functions $h$
\[ \mathcal{A} h(\xi) = \int_\G \left[h(\xi+\d_\a) - h(\xi)\right]\Lb(\d_\a) + \int_\G \left[ h(\xi-\d_\a) - h(\d_\a)\right]\xi(d\a). \]

The operator $\cA$ is the generator of a spatial immigration-death process with immigration rate $\Lb$ on $\Gamma$, unit per capita death rate and the associated stationary distribution to the generator $\mathcal{A}$ is a Poisson process with mean measure $\bm\L$. We now define $h_f(\xi)$ to be the solution to the following (if it exists),
\[ \cA h_f(\xi) = f(\xi) - \Po(\bm{\L})(f),\]
for all $f$ from a suitable family of functions $\mathcal{F}$. Then given a point process $\Xi$, the aim is to use properties of the function $h_f$ to estimate $|\E f(\Xi) - \Po(\bm\L)(f)|$ by finding a bound for $|\cA h_f(\Xi)|$.

We will use the term \emph{conditional} to mean conditional upon at least $m$ atoms in the entire space $\Gamma$. $\Xi$ follows the distribution of a \emph{conditional Poisson point process}, if it has the distribution of a Poisson point process conditional on having at least $m$ atoms in $\Gamma$. Its distribution will be denoted by $\Po^\resm(\Lb)$. Note in general we will use the notation $\vphantom{a}^{(m)}$ to denote conditional upon $m$ atoms.


We will now associate a conditional Poisson point process with the limiting distribution of a spatial immigration-death process, in a similar fashion to how it is defined for conditional random variable approximation in Gan \& Xia~\cite{GX15}. Given the process is in configuration $\xi$, with $|\xi| \geq m$, then the process will stay at this configuration for an exponentially distributed time with mean $\frac{1}{|\xi|\ind_{|\xi|>m} + \L}$, where $\L = \Lb(\G)$. With probability $\frac{\L}{|\xi|\ind_{|\xi|>m} + \L}$, the process will then add a new point into the system at a point following distribution $\Lb/\L$ , or with probability $\frac{|\xi|\ind_{|\xi|>m} }{|\xi|\ind_{|\xi|>m} + \L}$, one of the existing points chosen uniformly at random will be removed. The generator of such a process is
\begin{align}
\cA^\resm h(\xi) = \int_\G \left[h(\xi+\d_\a) - h(\xi)\right]\Lb(\d_\a) + \int_\G \left[ h(\xi-\d_\a) - h(\d_\a)\right]\xi(d\a) \cdot \ind_{|\xi| > m}. \label{generator}
\end{align}

\begin{lemma}
The unique stationary distribution for the generator $\cA^\resm$ is $\Po^\resm(\Lb)$.
\end{lemma}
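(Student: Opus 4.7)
The plan is to verify stationarity directly using a Palm / Mecke-type calculation, and then argue uniqueness via a coupling with the standard (unconditioned) immigration–death process.

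For stationarity, I want to show $\int \cA^{(m)} h(\xi)\, \Po^{(m)}(\Lb)(d\xi) = 0$ for a sufficiently rich class of test functions $h$. Writing $\pi$ for the law of $\Po(\Lb)$ and using that $\Po^{(m)}(\Lb)$ is $\pi$ conditioned on $|\cdot|\geq m$, I must verify
\[
\int \ind_{|\xi|\geq m}\left\{\int_\G [h(\xi+\d_\a)-h(\xi)]\Lb(d\a) + \ind_{|\xi|>m}\int_\G [h(\xi-\d_\a)-h(\xi)]\xi(d\a)\right\}\pi(d\xi)=0.
\]
The key step is the Mecke formula for the Poisson process $\pi$, which rewrites the death integral as
\[
\int \ind_{|\xi|>m}\int_\G [h(\xi-\d_\a)-h(\xi)]\xi(d\a)\,\pi(d\xi) = \int\int \ind_{|\xi+\d_\a|>m}[h(\xi)-h(\xi+\d_\a)]\,\Lb(d\a)\,\pi(d\xi).
\]
Since $|\xi+\d_\a|>m$ is the same as $|\xi|\geq m$, this is exactly the negative of the birth integral restricted to $|\xi|\geq m$, so the two terms cancel. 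This is the same mechanism by which the ordinary Poisson process is stationary for $\cA$; the conditioning only shifts a single indicator, and the Mecke identity absorbs the shift cleanly.

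For uniqueness, I would couple two copies of the process started from arbitrary initial configurations $\xi_0,\xi_0'$ with $|\xi_0|,|\xi_0'|\geq m$ to the same immigration Poisson clock on $\G$ with intensity $\Lb$: whenever an immigration occurs, both processes add the same point, and the two systems share death clocks on their common points. Because deaths are suspended only at the boundary $|\xi|=m$, and immigrations continue unabated, each process reaches configurations with $|\xi|$ arbitrarily large infinitely often; once this happens the common points grow and the symmetric-difference points die out (coupled to the unconditioned immigration-death process, where coalescence in finite time is a standard fact going back to Preston). Since the processes coalesce almost surely in finite time, the law of $\Xi_t$ is asymptotically independent of the starting configuration, forcing uniqueness of the stationary distribution.

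The main obstacle is really the bookkeeping in the Mecke step: one must check that the condition $|\xi|>m$ in the death term matches exactly the condition $|\xi+\d_\a|>m$ that appears after reindexing, and that the restriction $|\xi|\geq m$ inherited from the conditional distribution matches the index of the birth term after the shift. Once this identification is made the computation is immediate. The coupling argument is routine by comparison with the unconditioned case, modulo checking that the floor at $m$ does not prevent coalescence—which it does not, since immigration keeps driving both coupled processes strictly above $m$.
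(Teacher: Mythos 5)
Your stationarity argument is correct and is essentially the paper's ``direct calculation'' made explicit: conditioning $\Po(\Lb)$ on $\{|\cdot|\geq m\}$ only multiplies $\pi$ by the indicator $\ind_{|\xi|\geq m}$, and the Mecke identity turns the death term
\[
\int \ind_{|\xi|>m}\int_\G [h(\xi-\d_\a)-h(\xi)]\,\xi(d\a)\,\pi(d\xi)
\]
into $\int\int \ind_{|\xi|\geq m}[h(\xi)-h(\xi+\d_\a)]\,\Lb(d\a)\,\pi(d\xi)$, which cancels the birth term. That bookkeeping is exactly right.

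For uniqueness you take a genuinely different route from the paper, which simply invokes Theorem~7.1 of Preston to deliver existence and uniqueness in one stroke. Your coupling approach is in principle more self-contained, but as written it has a gap precisely at the step you call ``routine.'' If the two coupled processes share death clocks on common particles and one process sits at $|\xi|=m$ while the other is strictly above, then a shared clock ringing kills the particle in the second process but leaves it alive in the first; a common particle thereby becomes a symmetric-difference particle. So the symmetric difference is \emph{not} monotone and does \emph{not} evolve as an unconditioned pure-death process, and the appeal to ``coalescence in finite time for the unconditioned immigration-death process, going back to Preston'' does not transfer. You would need a different argument for coalescence, e.g.\ a renewal-type argument showing that with positive probability on each excursion above $m$ both processes simultaneously flush out all pre-coupling particles and are left with only commonly immigrated ones, or you could simply invoke Preston's theorem as the paper does, which avoids the issue entirely. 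As it stands, the uniqueness half of your proof is not complete.
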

\begin{proof} We can apply Theorem~7.1 from Preston~\cite{preston75} to show the existence and uniqueness of the stationary distribution for $\cA^\resm$. It remains to show that if $\Xi \sim \Po^\resm (\Lb)$ then $\E \cA^\resm h(\Xi)= 0$, which can be verified via a direct calculation.
\end{proof}
It should be noted that we can also censor the immigration rate at a level $n>m$ and we would then have a Poisson point process conditioned on having a number of atoms between $m$ and $n$ as the stationary distribution. In line with the original motivation, for this paper we will just focus on conditioning from below.

As per usual in Stein's method, for any bounded function $f$, we set up a Stein equation, and hope to solve for a $h^\resm_f(\xi)$ that satisfies
\begin{align}
 \cA^\resm h_f^\resm(\xi) = f(\xi) - \Po^\resm(\Lb)(f),\label{CPPsteineq}
\end{align}
where $\Po^\resm(\Lb)(f) = \E f(Z^\resm)$ and $Z^\resm \sim \Po^\resm(\Lb)$.
\begin{lemma}
For all bounded functions $f$,
\begin{align*}
h_f^\resm(\xi)= -\int_0^\infty \E \left[ f(\zx - \poml(f)\right]dt,
\end{align*}
where $\zxt\dot$ is a spatial immigration-death point process with generator $\cA^\resm$ and $Z_\xi^\resm(0) = \xi$, is well defined and is the solution to the Stein equation \Ref{CPPsteineq}.
\end{lemma}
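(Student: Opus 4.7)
The approach is to exploit the semigroup of the process with generator $\cA^\resm$. Write $T_t^\resm f(\xi) := \E[f(\zx)]$ and abbreviate $\pi := \poml$. The plan has two parts: first, show that $T_t^\resm f(\xi)-\pi(f)$ is absolutely integrable in $t$, so that $h_f^\resm$ is well defined; second, apply $\cA^\resm$ under the integral and identify the result with the right-hand side of \Ref{CPPsteineq}. For integrability, since $f$ is bounded it suffices to establish an exponential ergodicity estimate
\[
|T_t^\resm f(\xi)-\pi(f)| \Le C(\xi)\,\|f\|\,e^{-ct}
\]
for some $c>0$ and some finite $C(\xi)$. I would obtain this via coupling: run two copies of the process with generator $\cA^\resm$ driven by common immigrations, one started from $\xi$ (with $|\xi|\geq m$) and the other from a configuration drawn from $\pi$, pair matched atoms and assign them common death clocks. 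So long as both processes have size above $m$, this reduces to the classical Barbour-Brown coupling for the unconditional immigration-death process and the symmetric-difference size contracts at rate $1$; the new feature is that when a process hits $|\cdot|=m$ its deaths are suppressed, but immigrations continue on both sides, so after a bounded expected excursion the contraction resumes.

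Given the integrability, the Stein equation is verified by a standard semigroup computation. Since $\pi(f)$ is constant in $\xi$, $\cA^\resm$ annihilates it, so
\[
\cA^\resm h_f^\resm(\xi) = -\int_0^\infty \cA^\resm T_t^\resm f(\xi)\,dt.
\]
The interchange of $\cA^\resm$ and the time integral is justified by Fubini's theorem: the expression $\cA^\resm g(\xi)$ is just an integral against $\Lb$ plus a finite sum over the atoms of $\xi$, and the ergodicity estimate above ensures that the integrands $|T_t^\resm f(\xi\pm\d_\a)-T_t^\resm f(\xi)|$ are integrable against $\Lb(d\a)+\xi(d\a)$ jointly with the $dt$ integration. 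Kolmogorov's backward equation gives $\cA^\resm T_t^\resm f(\xi)=\tfrac{d}{dt}T_t^\resm f(\xi)$, and the integral telescopes,
\[
\cA^\resm h_f^\resm(\xi) = -\int_0^\infty \frac{d}{dt}T_t^\resm f(\xi)\,dt = T_0^\resm f(\xi) - \lim_{t\to\infty}T_t^\resm f(\xi) = f(\xi) - \pi(f),
\]
which is exactly \Ref{CPPsteineq}.

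The principal obstacle is the coupling step. The indicator $\ind_{|\xi|>m}$ in $\cA^\resm$ switches off the death mechanism at the floor $m$, so the monotone pair-deaths coupling used in the unconditional case can desynchronise whenever either process reaches size $m$. I expect to handle this by coupling only the atoms in excess of the floor and bounding the expected length of excursions to $\{|\cdot|=m\}$; such excursions introduce a defect that is bounded in expectation and can be absorbed into the prefactor $C(\xi)$ without destroying the exponential rate. Once the exponential contraction is in hand, the remainder of the argument is routine semigroup calculus.
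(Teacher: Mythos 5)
The paper does not actually give a proof of this lemma: the statement appears without a proof environment, the author evidently regarding it as a routine adaptation of the analogous unconditional results (cf.\ Barbour and Brown~\cite{BB92}, Brown and Xia~\cite{BX01}). Your semigroup strategy --- establish integrability by an ergodicity estimate, pass $\cA^\resm$ under the integral, invoke Kolmogorov's backward equation, and telescope --- is exactly the standard route, and the second half of your argument (the Fubini interchange and the telescoping identity) is correct and complete as written.

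The part that needs more care is the coupling behind the ergodicity estimate. You say that when a process hits the floor $|\cdot|=m$ ``the contraction pauses and then resumes after a bounded excursion,'' but this understates the difficulty. First, since $\Lb$ is diffuse, the two initial configurations $\xi$ and the $\pi$-draw will share no atoms at time $0$, so there is nothing to ``pair'' at the start; the matching is built over time from common immigrants. Second and more importantly, during an excursion in which one process is frozen at count $m$ while the other is not, a \emph{matched} common immigrant may die in one process but not the other, converting a matched pair into two unmatched atoms, so the symmetric difference can strictly \emph{increase}, not merely stall. The constant rate $c>0$ therefore does not fall out of the naive pair-deaths argument, and one needs a Lyapunov or excursion estimate to show the growth incurred near the floor is dominated by the contraction away from it. A cleaner way to close the gap: first couple the count processes $|Z_\xi^\resm(\cdot)|$ and $|Z_\eta^\resm(\cdot)|$, two positive-recurrent birth--death chains on $\{m,m+1,\dots\}$ with constant birth rate $\L$ and linear death rates, which coalesce with an exponentially small tail by classical birth--death coupling results; once the counts agree, both processes share the same total transition rate at every instant, the immigrations are common, and one can bias the choice of which atom dies so that non-shared atoms are preferentially killed, driving the symmetric difference to zero in a time with exponential tail. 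This yields the integrability you need, and the rest of your proof then goes through unchanged.
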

Our metric of choice to evaluate the distance between two point processes will be the $\overline{d}_2$ metric first introduced in Xia~\cite{xiathesis} and systematically studied in Schuhmacher \& Xia~\cite{SX08}. Important to note is that the metric $\bar{d_2}$ encapsulates convergence in distribution of point processes, see Proposition~2.3 of Schuhmacher \& Xia~\cite{SX08}. 

\begin{definition}
For $\xi = \sum_{i=1}^n \d_{x_i}$, $\eta = \sum_{i=1}^m \d_{y_i} \in \cH$ with $n \geq m$, and $d_0\leq 1$ as the metric on $\Gamma$, the metric $\bar{d}_1$ is defined by
\[ \overline{d}_1(\xi, \eta) = \frac{1}{n} \left( \min_{\pi \in \Pi_n} \sum_{i=1}^m d_0(x_{\pi(i)}, y_i) + (n-m) \right),\]
where $\Pi_n$ is the set of all permutations of $\{1, \ldots, n\}.$
\end{definition}
\begin{definition}
Let $\cF_{\bar{d}} = \{ f : \cH \rightarrow [0,1]: |f(\xi) - f(\eta)| \leq \overline{d}_1(\xi,\eta)\ \forall\ \xi,\eta \in \cH\}$. Then for any two point distributions $P$, $Q$ on $\cH$, define
\[\overline{d}_2(P,Q) = \sup_{f \in \cF_{\bar{d}}} \left| \int f dP  - \int f dQ \right|.\]
\end{definition}

In this paper, we will assume that $\Lb$ is \emph{diffuse}, that is, it has no atoms in $\G$. If $\Lb$ is not diffuse, we can approximate it by a diffuse measure accurately by \emph{lifting} the process as described in Chen \& Xia~\cite{CX04}. Furthermore for any configuration $\xi$ we will also without loss of generality assume that the points in $\xi$ are all distinct. Similarly to how we can assume $\Lb$ is diffuse, for any non-distinct points in a configuration $\xi$ we can `shift' points by a small amount and then take limits due to the continuity of the metric $\bar{d}_1$.

To succesfully apply Stein's method, we typically require over the family of functions $f \in \cF_{\bar{d}}$, bounds for:
\begin{align}
\| \D h^\resm \| &:= \sup_{\xi} \|\D h_f^\resm(\xi;\a)\| :=\sup_{\xi} \sup_{f,\a} | h_f^\resm(\xi + \d_\a) - h_f^\resm(\xi)|,\label{D1}\\
\| \D^2 h^\resm \| &:= \sup_{\xi} \|\D^2 h_f^\resm(\xi;\a,\b)\|\notag\\
&:=\sup_{\xi}\sup_{f,\a,\b} |  h_f^\resm(\xi + \d_\a + \d_\b) - h_f^\resm(\xi+ \d_\a)  - h_f^\resm(\xi + \d_\b) + h_f^\resm(\xi)|.\label{D2}
\end{align}
\begin{theorem}\label{firstdiff}
Define
\[K_1 :=  \min \left( \frac{1}{m}, \frac{0.95 + \log^+\L}{\L} \right).\]
For $m \geq 1$,
\[\|\D h^\resm\| \leq \frac{1}{\L} + (m+1)K_1,\]
and if $\L > m+2$,
\[\|\D h^\resm\| \leq\frac{1}{\L(\L - m)} + \frac{\L}{\L - m} K_1,\]
\end{theorem}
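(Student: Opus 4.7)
From the integral representation of $h_f^\resm$ provided by the preceding lemma and the Lipschitz property of $f\in\cF_{\bar d}$, one has
\[|\D h_f^\resm(\xi;\a)| \le \int_0^\infty \E\bigl[\bar d_1(\zxa,\zx)\bigr]\, dt,\]
so the task reduces to bounding the right-hand side by coupling the two immigration--death processes. The plan is to use a clock-based coupling: share the immigration process, and attach independent unit-rate clocks to each atom, with clocks shared between the two processes for the original atoms of $\xi$ and a private clock in the larger process for the extra atom $\a$. A clock firing kills its atom in a given process only if that process currently has more than $m$ atoms, so the conditioning is automatically respected. Because the larger process always has size at least $m+1$, the extra atom $\a$ dies at rate one; in particular, so long as the smaller process stays strictly above the boundary, the two processes are related by exactly the atom $\a$ and $\bar d_1(\zxa,\zx)=1/(|\zx|+1)$.

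Let $\tau$ denote the first time at which either $\a$ dies or a matched atom dies in the larger process while $|\zx|=m$ (the latter being the first time a genuine mismatch can arise). Up to $\tau$ the coupling coincides with the standard unconditional Poisson coupling of Xia~\cite{xia05}, and integrating $\bar d_1\le 1/(|\zx|+1)$ over that window gives a contribution bounded by the unconditional Stein factor $K_1$. For the first claimed bound, I would restart at $\tau$ via the strong Markov property. If a boundary matched death has occurred, the smaller process is frozen at size $m$ until the next immigration, which arrives at rate $\L$; during this waiting window $\bar d_1\le 1$, yielding the additive $1/\L$ contribution. Once the smaller process escapes the boundary by acquiring an immigrant, it has $m+1$ atoms, any of which may participate in a residual mismatch; estimating each resolution cost as a copy of the unconditional Poisson coupling problem and summing $m+1$ applications of the bound $K_1$ yields the $(m+1)K_1$ term.

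For the sharper bound when $\L>m+2$, the underlying size chain has a strictly positive drift away from $m$, so boundary visits are rare and brief. Quantifying this through the birth--death stationary distribution on $\{m,m+1,\dots\}$, whose boundary mass is of order $m/\L$, refines the geometric-series estimate for the expected number of boundary excursions within the coupling lifetime, replacing the prefactor $m+1$ by $\L/(\L-m)$ and the waiting contribution $1/\L$ by $1/(\L(\L-m))$. The main obstacle is the mismatch phase: after a matched death at the boundary the two processes differ in two atoms rather than one, and the coupling is no longer of the clean form $\zxa=\zx+\d_\a$. I would handle this by tracking the symmetric-difference size as an auxiliary birth--death chain and bounding each excursion's $\bar d_1$-contribution separately by combining the crude bound $\bar d_1\le 1$ during boundary waiting with the unconditional Stein factor $K_1$ for resolution, taking care that the total cost collapses to the stated combination of $1/\L$ and $K_1$ rather than accumulating uncontrollably.
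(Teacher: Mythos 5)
Your overall coupling picture is close in spirit to the paper's: couple the two immigration--death processes so that they differ by exactly the atom $\a$ for as long as possible, bound that ``clean'' phase by the unconditional Stein factor $K_1$ (via Lemma~\ref{zmz0} and the Schuhmacher--Xia estimate), and recognise that the real difficulty begins when $|\zxt\dot|$ hits the boundary $m$ while $\a$ is still alive. The paper does this through the explicit split \Ref{split}, with $\tau_\a$ accelerated as in \Ref{taua} so that the two compared configurations always have the same size, whereas you give $\a$ an ordinary unit-rate clock and use the crude bound $\bar d_1\le 1$ during boundary waiting. That difference is cosmetic; the structure so far is sound.

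The genuine gap is in how you control the recursion. You claim the residual mismatch contributes $(m+1)K_1$ because there are ``$m+1$ atoms, any of which may participate, summing $m+1$ applications of $K_1$.'' This is not right: after a boundary matched death and an immigration you have two size-$(m{+}1)$ configurations differing in a \emph{single} position, and that single mismatch can fail to resolve and throw you back to the boundary an unbounded (geometric) number of times. The factor $m+1$ in the paper has nothing to do with counting atoms; it is $1/(1-p_{\L,m})$, a geometric-series sum where $p_{\L,m}\le m/(m+1)$ is the probability that $\a$ is still alive when the process first returns to size $m$. This survival probability is the key quantitative ingredient (Lemma~\ref{alphadeath}), computed from a result of Brown \& Xia on $\E|Z_0(\tau_k)|$, and it enters twice: once in the $I_1$-recursion $|I_1|\le\frac{1}{\L m}+K_1+p_{\L,m}|I_1|$ and once in the $I_2$-recursion $|I_2|\le\frac{m}{m+1+\L}|I_1|+\frac{\L}{m+1+\L}p_{\L,m+1}|I_2|$. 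Without a bound of this type you cannot prevent the excursion costs from accumulating, which is exactly the obstacle you flag at the end of your proposal without resolving. The same lemma, via the alternative estimate $p_{\L,k}\le k/\L$, is what gives the sharper $\L>m+2$ bound; your appeal to the birth--death stationary distribution and a drift heuristic is not a substitute, since the quantity needed is a survival probability on an excursion, not a stationary mass. In short: right conceptual decomposition, but the missing piece is a quantitative return-time/survival-probability lemma, and the $(m+1)K_1$ step as written does not follow.
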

\begin{theorem}\label{seconddiff}
Define
\[ K_2 :=\frac{2\log \L}{\L} \ind_{\L \geq 1.76} + \frac{1}{m+1} \ind_{\L < 1.76}. \]
For $m \geq 1$,
\begin{align*}
 \|\D^2 h^\resm\| \leq &\min\Big\{\frac{2}{\L} + 2(m+1)K_1,\\
&\frac{(4m+3)(m+3)}{(m+3)(2m+2)\lambda + 2\lambda^2} + \frac{4m(m+1)(m+3)}{(m+3)(2m+2) + 2\lambda} K_1+K_2\Big\},
\end{align*}
and if $\L > m+2$,
\begin{align*}
\|\D^2 h^\resm\| \leq \min\left\{ \frac{2}{\L(\L - m)} + \frac{2\L}{\L - m}K_1,\frac{3\L + m}{\L(\L-m)(\L+m)} + \frac{4\L m}{(\L-m)(\L+m)}K_1 + K_2\right\}.
\end{align*}
\end{theorem}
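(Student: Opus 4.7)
The plan begins from the integral representation in Lemma~1.2, which gives
\[ \Delta^2 h_f^{(m)}(\xi;\a,\b) = -\int_0^\infty \E\big[f(\zxab) - f(\zxa) - f(\zxb) + f(\zx)\big]\,dt, \]
with the four conditional immigration--death processes built on a common probability space under the natural coupling: immigrations are shared across all four, atoms common to all four configurations are killed simultaneously, and the distinguishing atoms $\a,\b$ are matched across the pairs they differentiate whenever the censoring $\ind_{|\cdot|>m}$ permits. The first bound in each regime then follows immediately from the telescoping identity $\Delta^2 h_f^{(m)}(\xi;\a,\b)=\Delta h_f^{(m)}(\xi+\d_\b;\a)-\Delta h_f^{(m)}(\xi;\a)$ together with Theorem~\ref{firstdiff}, giving the $2\|\Delta h^{(m)}\|$ estimate.

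For the refined bounds I would split the integral at the coupling time $\tau$ at which both $\a$ and $\b$ have died in every process. On $[0,\tau)$ the four processes disagree only through $\a,\b$ and their surroundings, so the integrand is bounded by $\|\Delta h^{(m)}\|$ from Theorem~\ref{firstdiff} weighted by the probability that coupling has not yet succeeded; this yields the fractional and $K_1$ summands once one carries out direct birth--death computations on the total-mass chain $|\zxt(t)|$, which has immigration rate $\L$ and death rate $k\,\ind_{k>m}$ in state $k$. The inputs needed are first-passage probabilities and times from states $m+1$ and $m+2$ down to $m$, expected occupation times at the boundary state, and tail bounds for $\tau$ itself. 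On $[\tau,\infty)$ the coupling is successful and the residual contribution is bounded by comparing with the unconditional Poisson point process Stein solution of Barbour \& Brown~\cite{BB92} and Brown, Weinberg \& Xia~\cite{BWX00}, for which $\|\Delta^2 h\|\leq K_2$ is classical; this supplies the $K_2$ summand, with the censoring discrepancy on rare excursions back to $|\cdot|=m$ absorbed into the $K_1$ coefficient.

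The separate treatment of $\L>m+2$ exploits the fact that the chain $|\zxt(t)|$ then sits uniformly above $m$ in stationary fluctuation, so boundary-hitting estimates simplify and produce the reduced expressions $\frac{3\L+m}{\L(\L-m)(\L+m)}$ and $\frac{4\L m}{(\L-m)(\L+m)}K_1$ in place of the more intricate fractions $\frac{(4m+3)(m+3)}{(m+3)(2m+2)\L+2\L^2}$ and $\frac{4m(m+1)(m+3)}{(m+3)(2m+2)+2\L}K_1$ that must be carried in the general $m\geq 1$ case.

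The main technical obstacle is quantifying the coupling defect caused by the censoring $\ind_{|\cdot|>m}$. In the unconditional case the coupling after $\tau$ is exact; here $\zxt(t)$ can sit at $m$ atoms while $\zxa(t)$ keeps shedding, so the increment $f(\zxa)-f(\zx)$ carries extra variation that must be absorbed into $\|\Delta h^{(m)}\|$ with the right boundary-hitting weights. Making these weights sharp uniformly in $\xi,\a,\b$ is the real content of the refined bounds and is what converts the naive $2\|\Delta h^{(m)}\|$ estimate into the stated three-term expression of the form $(\text{fraction})+(\cdot)K_1+K_2$.
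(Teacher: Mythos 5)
Your first-bound argument (telescope $\D^2 h$ into two $\D h$ increments and apply Theorem~\ref{firstdiff}) is exactly the paper's, but the plan for the refined bounds has a genuine structural gap.

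You propose to split the \emph{time} integral at a coalescence time $\tau$ and then bound the $[0,\tau)$ contribution by $\|\D h^\resm\|$ and the $[\tau,\infty)$ contribution by the unconditional $K_2$. That is not what happens, and it would not give the stated constants. The paper's decomposition is not a split of the time axis at all: in \Ref{fourparts} it adds and subtracts inside the integrand the surrogate processes $\zx+\d_\a\ind_{\tau_\a>t}+\d_\b\ind_{\tau_\b>t}$, $\zx+\d_\a\ind_{\tau_\a>t}$, $\zx+\d_\b\ind_{\tau_\b>t}$, with $\tau_\a,\tau_\b$ conditionally independent copies of the waiting time in \Ref{taua}. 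The $K_2$ term is then the whole $[0,\infty)$ integral of the pure-surrogate four-term difference, which reduces to the unconditional second-difference bound of Schuhmacher \& Xia~\cite{SX08}; it is not a "post-$\tau$ residual". The remaining piece is $I_3$ of Lemma~\ref{lemmaI3}, a six-process comparison. Nothing in your proposal produces the recursive inequality $|I_3|\le\frac{m}{\L+2m+2}\bigl[3|I_1|+\|\D h^\resm\|\bigr]+\frac{\L}{\L+2m+2}\,p^{(2)}_{\L,m+2}\,|I_3|$, which is the real engine behind the specific fractions in the statement. That inequality comes from a joint coupling of the first transition across all six processes (rate $\L+2m+2$), a careful case analysis in which two of the four death cases cancel, and the return-probability bound $p^{(2)}_{\L,m+2}\le\min\{\tfrac{m+1}{m+3},\tfrac{m+2}{\L}\}$ obtained from Lemma~\ref{alphadeath}. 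Without isolating $I_3$, setting up that six-process first-transition coupling, and solving the recursion, the "direct birth--death computations on the total-mass chain" you invoke will not yield the coefficients $\frac{(4m+3)(m+3)}{(m+3)(2m+2)\L+2\L^2}$, $\frac{4m(m+1)(m+3)}{(m+3)(2m+2)+2\L}$, etc.

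A secondary inaccuracy: your claim that on $[0,\tau)$ "the integrand is bounded by $\|\Delta h^{(m)}\|$" does not hold for the four-term second difference, whose a priori bound is $2\|\Delta h^\resm\|$ (your first bound); squeezing out the $K_2$-plus-$K_1$ structure requires the add-and-subtract trick, not a time split. The discussion of the $\L>m+2$ regime is consistent with the paper in spirit (it enters through the sharper $\L$-dependent branch of $p_{\L,k}$ in Lemma~\ref{alphadeath}), but again the mechanism is the $p^{(2)}_{\L,m+2}\le\frac{m+2}{\L}$ choice inside the $I_3$ recursion, not a separate stationary-fluctuation argument.
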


This paper will be set out in the following manner. Section 2 will focus on proving the above bounds for the Stein factors, this will also include a short diversion into some non-uniform bounds for the first and second difference of $h$, and section 3 will include a short application. 

\section{Bounds for the Stein factors}
\subsection{Bounds for the first difference of $h$}
When deriving bounds for unconditional Poisson point process approximations, the canonical technique to calculate bounds for \Ref{D1} and \Ref{D2} is to couple the additional points at $\a$ and $\b$ independently to $Z_\xi \dot$. More precisely, we can set
\begin{align*}
Z_{\xi + \a}(t) &= Z_{\xi}(t) + \d_\a \ind_{\tau_\a > t},\\
Z_{\xi + \a + \b}(t) &= Z_{\xi}(t) + \d_\a \ind_{\tau_\a > t} + \d_\b \ind_{\tau_\b > t},
\end{align*}
where $\tau_\a, \tau_\b$ are independent exponential random variables with mean 1, and are independent of $Z_\xi \dot$. As a result, the bounds for \Ref{D1} and \Ref{D2} will generally depend upon how long it takes for the particles at $\a$ and $\b$ to die. In our conditional setting, one would hope we would be able to similarly `separate' the extra point at $\a$ from $\xi$, and run a (not necessarily independent) pure death process for the point at $\a$. In the case of conditional Poisson approximation, this approach works, but it does not for point processes. The problem is created by the location of deaths, where as in random variable approximation we essentially only care about the total number. The following reveals the problems that keeping track of particle locations generates.

Suppose $m=1$, we would like to define a coupling such that:
\begin{align} Z_{\xi + \d_\a}^\resone(t) \stackrel{d}{=} Z_\xi^\resone(t) + \d_\a \ind_{\tau_\a>t},\label{approx}\end{align}
for some stopping time $\tau_\a$. However, such a coupling does not exist because of the following reason. $Z_{\xi + \d_\a}^\resone(\cdot) = \d_\a$ is a configuration that can be achieved with a positive probability, but it is not a possible realisation for $Z_\xi^\resone(\cdot) + \d_\a \ind_{\tau_\a>\cdot}$, as $|Z_\xi^\resone(\cdot)| \geq 1$. To deal with the lack of a direct coupling, we shall formulate an `approximate' coupling that enables us to use the formulation of the right hand side of \Ref{approx} in the following manner
\begin{align}
\left|\D h^\resm_f(\xi;\a)\right| &= \left| \int_0^\infty \left[ \E f(\zxa) - \E f(\zx) \right] dt \right|\notag\\
	&\leq \left| \int_0^\infty \left[ \E f(\zxa) - \E f(\zx + \d_\a \ind_{\tau_a>t}) \right] dt \right|\notag\\
	&\phantom{V} +  \left| \int_0^\infty \left[ \E f(\zx + \d_\a \ind_{\tau_a>t}) - \E f(\zx) \right] dt \right|.\label{split}
\end{align}
An important question is, how do we best define $\tau_\a$ so as to minimise these two integrals in \Ref{split}? Examining the first integral of \Ref{split}, due to our choice of metric, it would be best if we defined $\tau_\a$ such that the two processes $\zxat \dot$ and $\zxt \dot + \d_\a \ind_{\tau_\a > \dot}$ both have the same number of particles at all times. To this end, we can define $\tau_\a$ such that 
\begin{align}
\Pr\left(\tau_\a > t\left| |Z_\xi^\resm\dot|\right)\right. = \exp\left\{-\int_0^t (1 + m \ind_{|Z_{\xi}^\resm(s)| = m}) ds\right\}. \label{taua}
\end{align} In essence, given $\zxat \dot$ and $\zxt \dot + \d_\a \ind_{\tau_\a > \dot}$ have more than $m+1$ particles, then they can be coupled exactly. If the number of particles reaches $m+1$ and the point at $\a$ is still alive, then in $\zxt \dot + \d_\a \ind_{\tau_\a > \cdot}$, all the death rates are forced to the single point at $\a$.

The decomposition of \Ref{split} may initially seem to be somewhat arbitrary, however there is an interpretation for this decomposition if we think about $|\D h_f^\resm(\xi;\a)|$ in the following manner. Essentially what needs to be considered is how long it takes for $\zxat \dot$ and $\zxt \dot$ to coalesce, and the aim is to find a coupling so this occurs as quickly as possible. Naively, one might think that we are simply waiting for the point at $\a$ to die, and until it dies, $\zxat \dot$ will have exactly one more point than $\zxt\dot$. 
As long as $|\zxat\dot| > m+1$, we can couple of the two processes exactly. The problem occurs if we reach a state where $|\zxat\dot| = m+1$ and the point at $\a$ is still alive. At this time, $\zxat\dot$ would still have per capita death rate, but as $\zxt\dot$ would only have $m$ particles, it would therefore have a net death rate of 0. From here it is possible that a particle that is not $\a$ will die from $\zxat\dot$, and hence we need to account for this. 

The decomposition in \Ref{split} is designed to address this issue exactly. Notice that only one of the integrals in \Ref{split} is ever non-zero at any given time. The term in the second integral is going to be $0$ for $t > \tau_\a$. Similarly, the term in the first integral is $0$ until $t > \tau_\a$, and then from this point onwards, it may be non-zero. In this manner, we can see that the second integral essentially takes care of $|\D h_f^\resm(\xi;\a)|$ until an additional death occurs in $\zxat\dot$ compared to $\zxt\dot$, and the first integral accounts for the chance that the additional death may not be the point at $\a$. Before we prove Theorem~\ref{firstdiff} we will need a number of lemmas.

\begin{lemma}\label{zmz0}
There exists a coupling such that
\[ | Z_\xi^\resm \dot| \geq |Z_\xi\dot| \geq |Z_0\dot|,\]
where $Z_\xi \dot:= Z_\xi^{(0)}\dot$, and $Z_0\dot$ is a process that follows generator $\mathcal{A}^{(0)}$ with $Z_\xi(0) = \emptyset.$
\end{lemma}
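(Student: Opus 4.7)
The plan is to chain two monotone couplings, one for each inequality, so that the pathwise set-inclusions $Z_0(t)\subseteq Z_\xi(t)\subseteq Z_\xi^\resm(t)$ hold for all $t$, immediately yielding the desired ordering of cardinalities.

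For the right inequality $|Z_\xi\dot|\ge|Z_0\dot|$ I would exploit the linear structure of the unconditional generator $\cA^{(0)}$: existing particles die independently at unit per-capita rate, and new atoms arrive from an independent Poisson immigration stream. This lets one write
\[ Z_\xi(t) = Z_0(t) + \sum_{x\in\xi} \d_x\, \ind_{\tau_x>t}, \]
with i.i.d.\ Exp$(1)$ lifetimes $\{\tau_x\}_{x\in\xi}$ independent of the $Z_0\dot$ trajectory. A routine check of the infinitesimal dynamics shows the right-hand side is Markov with generator $\cA^{(0)}$ and initial state $\xi$, and the inclusion $Z_0(t)\subseteq Z_\xi(t)$ is built in.

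For the left inequality $|Z_\xi^\resm\dot|\ge|Z_\xi\dot|$ I would design a joint coupling, starting both processes from the common $\xi$, that preserves $Z_\xi(t)\subseteq Z_\xi^\resm(t)$ at all times. The construction is: (i) a shared immigration stream of rate $\Lb$ adds every arrival to both processes; (ii) attach to every particle of $Z_\xi(t)$ an independent rate-$1$ death clock which, on firing, removes the atom from $Z_\xi$ unconditionally and from $Z_\xi^\resm$ only when $|Z_\xi^\resm(t)|>m$; (iii) attach to every particle of $Z_\xi^\resm(t)\setminus Z_\xi(t)$ an independent rate-$1$ death clock that fires only when $|Z_\xi^\resm(t)|>m$, removing the atom from $Z_\xi^\resm$ alone. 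One then verifies that $Z_\xi$ marginally satisfies $\cA^{(0)}$ and that $Z_\xi^\resm$ has total death rate $|Z_\xi(t)|+|Z_\xi^\resm(t)\setminus Z_\xi(t)|=|Z_\xi^\resm(t)|$ uniformly distributed over its atoms whenever $|Z_\xi^\resm(t)|>m$, and no deaths otherwise, which matches $\cA^\resm$. Containment is preserved event-by-event: immigrations add a common atom to both sides, coupled deaths either remove from both or only from $Z_\xi$, and extra deaths remove only from $Z_\xi^\resm$.

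The main subtlety lies in step (ii): in the boundary regime $|Z_\xi^\resm(t)|=m$ the process $Z_\xi$ may still shed atoms while $Z_\xi^\resm$ is pinned from below, so one must track the suppressed coupled deaths together with the contribution of the $|Z_\xi^\resm(t)|-|Z_\xi(t)|$ extra clocks to confirm that the per-capita rate-$1$ structure of $\cA^\resm$ is recovered above the threshold. Chaining the two couplings then yields the three-way inequality.
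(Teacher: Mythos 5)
Your proof of the right inequality $|Z_\xi\dot|\ge|Z_0\dot|$ is the same as the paper's: decompose $Z_\xi(t)=Z_0(t)+D_\xi(t)$ with $D_\xi(t)$ a pure-death process of the initial atoms independent of $Z_0\dot$ (the paper cites Proposition~3.5 of Xia for this).

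For the left inequality $|Z_\xi^\resm\dot|\ge|Z_\xi\dot|$, however, you take a genuinely different route. The paper descends to the cardinality birth--death chains, observes that both have birth rate $\L$ at every level while $|Z_\xi\dot|$ has weakly larger death rates ($i$ versus $i\,\ind_{i\neq m}$), and then invokes the standard monotone birth--death coupling of Lindvall to turn this comparison of rates into a pathwise coupling. You instead build an explicit spatial coupling of the two point processes themselves: a shared immigration stream, and death marks attached to atoms which always act on $Z_\xi$ but act on $Z_\xi^\resm$ only above the boundary level $m$, with extra marks on the surplus atoms $Z_\xi^\resm\setminus Z_\xi$ active only above level $m$. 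This correctly reproduces $\cA^{(0)}$ and $\cA^\resm$ respectively and preserves set inclusion event by event, so it is a valid (indeed slightly stronger) coupling: you get $Z_\xi(t)\subseteq Z_\xi^\resm(t)$, not merely an ordering of cardinalities. The trade-off is that your construction requires the careful boundary-regime bookkeeping you flag at the end, whereas the paper's argument is shorter because it works at the level of totals and delegates the coupling construction to a cited general theorem. Both are correct; yours is more self-contained and yields pathwise containment, the paper's is more economical.
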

\begin{proof}
For $|Z_\xi\dot|$, the birth and death rates are, $\forall i \in \{ 0,1,\ldots \}$,
\begin{align*}
\a_i = \L ,\ \ 
\b_i = i.
\end{align*}
For $|\zxt\dot|$, the birth and death rates are, $\forall i \in \{ 0,1,\ldots \}$,
\begin{align*}
\a_i = \L, \ \ 
\b_i = \begin{cases}
0 & i = m,\\
i & \text{otherwise.}
\end{cases}
\end{align*}

From the above, it is clear that the birth rates of both processes match, and the death rates for $|Z_\xi\dot|$ are greater than those for $|\zxt\dot|$. Intuitively, as $|\zxt\dot|$ has strictly lower death rates, it should stochastically dominate $|Z_\xi\dot|$. To rigorously show the required result, we can use the coupling from Lindvall~(p.~163)~\cite{lindvall92}.

For the second inequality, it is sufficient to note that we can use a coupling to define
\begin{align*}
Z_\xi(t) = Z_0(t) + D_\xi(t),
\end{align*}
where $D_\xi(t)$ is a pure-death process with unit per capita death rate independent of $Z_0(t)$. For details of this coupling, see Proposition~3.5 in Xia~\cite{xia05}. 
\end{proof}


\begin{lemma}\label{alphadeath}
If $|\xi| = k\geq m$, $\xi(\{\a\})=0$ and $\tau_{k} = \inf \{ t : |\zxa| = k\}$, then
\[p_{\L,k} := \Pr\left(Z_{\xi + \d_\a}^\resm(\tau_{k}) (\{\a\}) = 1\right) \leq \min \left(\frac{k}{\L}, \frac{k}{k+1}\right).\]
\end{lemma}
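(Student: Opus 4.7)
My plan is to reduce $p_{\L,k}$ to the Laplace transform at $s=1$ of a first-passage time in an autonomous immigration--death process, and then to bound that Laplace transform via a short recursion. First I would build $Z_{\xi+\d_\a}^{\resm}(\cdot)$ on $[0,\tau_k]$ via independent exponential clocks: each particle in the system carries an independent $\mathrm{Exp}(1)$ lifetime clock, and immigrants arrive as an independent Poisson$(\L)$ flow on the time axis with marks distributed as $\Lb/\L$. Because $k\ge m$ guarantees $|Z_{\xi+\d_\a}^{\resm}(t)|\ge k+1>m$ throughout $[0,\tau_k]$, the indicator $\ind_{|\xi|>m}$ in \Ref{generator} is identically one on this interval and every death clock fires unimpeded; in particular the lifetime $T_\a$ of $\a$ is $\mathrm{Exp}(1)$, independent of the thinned process $\hat N(\cdot)$ obtained by deleting $\a$, where $\hat N$ is an autonomous immigration--death chain started at $\hat N(0)=k$ with birth rate $\L$ and unit per-capita death rate.

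Setting $\hat\s_{k-1}:=\inf\{t\ge 0:\hat N(t)=k-1\}$, I would next establish $p_{\L,k}=\E[e^{-\hat\s_{k-1}}]$. Indeed, on $\{T_\a>\hat\s_{k-1}\}$ we have $|Z_{\xi+\d_\a}^{\resm}(t)|=\hat N(t)+1\ge k+1$ for every $t<\hat\s_{k-1}$ and equal to $k$ at $t=\hat\s_{k-1}$, so $\tau_k=\hat\s_{k-1}$ with $\a$ alive at $\tau_k$; while on $\{T_\a\le\hat\s_{k-1}\}$, $\a$ is dead at time $T_\a\le\tau_k$. Conditioning on $\hat\s_{k-1}$ and using the independent $\mathrm{Exp}(1)$ distribution of $T_\a$ then delivers the identity.

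A first-step analysis on $\hat N$ next produces a recursion for $\psi_j:=\E[e^{-\hat\s_{j-1}}]$: the first jump from state $j$ occurs after an $\mathrm{Exp}(\L+j)$ wait and is a death (probability $j/(\L+j)$) or a birth (probability $\L/(\L+j)$), after which two independent first passages---from $j+1$ to $j$ and then from $j$ to $j-1$---are required by the strong Markov property, giving
\begin{equation*}
\psi_j \Eq \frac{j}{\L+j+1-\L\psi_{j+1}}.
\end{equation*}
From this, $\psi_{k+1}\le 1$ forces the denominator at $j=k$ to be at least $k+1$, yielding $\psi_k\le k/(k+1)$; for $\psi_k\le k/\L$ I would argue by backward induction: for any integer $J\ge\L$, $\psi_J\le 1\le J/\L$, and if $\psi_{j+1}\le (j+1)/\L$ then $\L\psi_{j+1}\le j+1$, so the denominator is at least $\L$ and $\psi_j\le j/\L$, propagating the bound all the way down to $j=k$. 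I expect the main obstacle to lie in the first paragraph---cleanly justifying the independence of $T_\a$ and $\hat N$---which hinges on $k\ge m$ keeping $\ind_{|\xi|>m}$ active throughout $[0,\tau_k]$ so that $\a$'s clock never couples with the rest of the system.
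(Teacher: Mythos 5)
Your proof is correct, but it follows a genuinely different route from the paper's. The paper exploits exchangeability of the $k+1$ surviving original particles at time $\tau_k$ to write
$p_{\L,k}=1-\tfrac{1}{k+1}\E\left(|Z_0(\tau_k)|+1\right)$,
where $Z_0(\cdot)$ counts the immigrants that are alive, then invokes the exact formula $\E|Z_0(\tau_k)|=-1+(k+1)\bigl(\bar F(k-1)/\bar F(k)-k/\L\bigr)$ from Brown \& Xia (2001) to obtain the identity $p_{\L,k}=1-\bigl(\bar F(k-1)/\bar F(k)-k/\L\bigr)\le k/\L$; the $\L$-independent bound $k/(k+1)$ is then obtained by inspecting the last transition before $\tau_k$ and noting that $\a$, if still alive, survives that final death with probability $k/(k+1)$. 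You, by contrast, decouple $\a$'s $\mathrm{Exp}(1)$ clock from the autonomous chain $\hat N$, identify $p_{\L,k}=\E\bigl[e^{-\hat\s_{k-1}}\bigr]$, and drive a first-step recursion $\psi_j=j/(\L+j+1-\L\psi_{j+1})$. Both arguments are sound; your recursion also yields both claimed bounds cleanly (take $\psi_{k+1}\le 1$ for $k/(k+1)$, and a downward induction from $J\ge\L$ for $k/\L$, observing the case $k\ge\L$ is trivial). The trade-off is that the paper's route is shorter given the Brown--Xia reference and produces an exact expression for $p_{\L,k}$ (which it notes is tight when $m=k=0$ or $\L=0$), whereas yours is self-contained and makes no appeal to that external identity. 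Your flagged concern about independence of $T_\a$ and $\hat N$ is handled correctly: since $k\ge m$, the truncated generator coincides with the unconditioned one on $\{|\,\cdot\,|>m\}\supseteq\{|\,\cdot\,|\ge k+1\}$, which holds on $[0,\tau_k)$, so the usual graphical construction of the unconditioned immigration--death process applies there.
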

\begin{proof}
First, we note that as the process has more than $m$ particles for all $t < \tau_k$, each particle can be treated independently over this time period. As a result, it suffices to consider $S$, the number of original points from the configuration $\xi + \d_\a$ that still remain in the system at time $\tau_k$. Due to the independence, given $S$ we then know that each original point is equally likely to survive with probability $\frac{S}{k+1}$, and we can use this fact to calculate $p_{\L,k}$. 

If we let $N = k-S$, be the number of new points in the system at time $\tau_{k}$ that have originated from immigration, it suffices to study $N$, and furthermore $N$ has the same distribution as that of $|Z_0(\tau_{k})|$, where $Z_0\dot$ is the process that tracks immigrants and their deaths from the process $\zxat\dot$. Therefore conditioning upon $|Z_0(\tau_{k})|$,
\begin{align*}
p_{\L,k} &= \sum_{i=0}^{m}\Pr\left( Z_{\xi + \d_\a}^\resm(\tau_{k}) (\{\a\}) = 1 \Big{|} |Z_0(\tau_{k})| = i\right) \Pr(| Z_0(\tau_{k})| = i).
\end{align*}
If at time $\tau_k$ there are $i$ particles in our system that arrived by immigration, and $k$ particles in total at time $\tau_{k}$, then there must be $k-i$ surviving original points in the system, and therefore
\begin{align*}
p_{\L,k} &= \sum_{i=0}^k \left( \frac{k - i}{k+1} \right)\cdot \Pr( |Z_0(\tau_{k})| = i)\\
	&=1 - \frac{1}{k+1} \E (|Z_0(\tau_{k})|+1).
\end{align*}
Brown \& Xia~\cite{BX01} (Eq.~5.17), showed that $$\E | Z_0(\tau_{k})| = -1 + (k+1) \left( \frac{\bar{F}(k-1)}{\bar{F}(k)} - \frac{k}{\L} \right),$$
where $\bar{F}(i) = \sum_{j=i}^\infty \Po(\L)(\{j\})$. Therefore
\begin{align*}
p_{\L,k} &= 1 - \left( \frac{\bar{F}(k-1)}{\bar{F}(k)} - \frac{k}{\L} \right) \leq \frac{k}{\L}.
\end{align*}
The above bound is redundant if $\L < k$. To achieve the $\L$-independent bound, we consider the last transition of the process at time $\tau_{k}$. Assuming the point at $\a$ is alive at this time, the probability of the point at $\a$ surviving the final death event is $\frac{k}{k+1}$. Therefore if $\L > k+1$, we use $\frac{k}{\L}$ as our bound, otherwise we can use $\frac{k}{k+1}$. This bound can not be improved in general as for $m=0$ and $k=0$ or $\L = 0$, equality holds.
\end{proof}

\begin{lemma}\label{lemmaI1}
For $m \geq 1$, define
\begin{align*}
I_1(\xi) := \int_0^\infty \E \left[ f(Z_{\xi + \d_\a - \d_U}^\resm(t)) - f(\zx) \right] dt,
\end{align*}
where $U$ is chosen uniformly at random from $\xi$. Let $|I_1| := \sup_{\xi : |\xi|\geq m} | I_1(\xi)|$, then
\begin{align*}
|I_1| &\leq (m+1) \left[ \frac{1}{\L m} + K_1 \right].
\end{align*}
Furthermore, if $\L > m+1$,
\begin{align*}
|I_1| &\leq \frac{1}{m(\L-m)} + \frac{\L}{\L-m}\cdot K_1.
\end{align*}
\end{lemma}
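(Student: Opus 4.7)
My plan is to couple the processes $Z^{(m)}_{\xi+\d_\a-\d_U}$ and $Z^{(m)}_\xi$ so they share the same cardinality at all times and differ by at most one point before a coalescence time, then split the resulting integral into a ``time at $m$'' and a ``time above $m$'' contribution. First I would set up the coupling with identical immigration events, shared death clocks for the particles common to both configurations, and a single shared death clock for the pair $\{\a,U\}$ firing at rate $\ind_{N(t)>m}$, where $N(t)$ is the (common) total size. Because both processes have size $N(t)$, the per-capita death rates match and the coupling is consistent with $\cA^\resm$. Writing $\tau$ for the firing time of the $\{\a,U\}$ clock, the two configurations differ only at positions $\a$ and $U$ for $t<\tau$ and coincide for $t\ge\tau$, so using $f\in\cF_{\bar d}$ and $d_0\le 1$,
\begin{align*}
|I_1(\xi)| \;\le\; \E\int_0^\tau \bar d_1\bigl(Z^{(m)}_{\xi+\d_\a-\d_U}(t),Z^{(m)}_\xi(t)\bigr)\,dt \;\le\; \E\int_0^\tau \frac{dt}{N(t)}.
\end{align*}

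Next I would decompose the right-hand side as $A+B$, where $A=\tfrac{1}{m}\E\int_0^\tau\ind_{N(t)=m}\,dt$ and $B=\E\int_0^\tau \ind_{N(t)>m}/N(t)\,dt$. For $A$, each visit of $N$ to level $m$ lasts an independent $\mathrm{Exp}(\L)$ duration (only immigration can leave $m$), so $A=\E V/(\L m)$ where $V$ is the number of visits to $m$ before $\tau$. A standard first-step analysis yields $\E V=1/q_{m+1}$, where $q_{m+1}=\Pr(\tau$ fires in an excursion above $m$ starting from $m+1)$. To show $q_{m+1}\ge 1/(m+1)$, I would couple with the zero-immigration version (same $m+1$ initial particles with the same $\mathrm{Exp}(1)$ lifetimes): then $N_{\text{no-imm}}(t)\le N(t)$ pointwise, so the hitting time of $m$ can only be later once immigration is switched on, while $\a$'s lifetime is unchanged; since among $m+1$ exchangeable exponential lifetimes $\a$'s is the minimum with probability exactly $1/(m+1)$, the inequality follows. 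Hence $A\le (m+1)/(\L m)$.

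For $B$ I would split the above-$m$ portion of the trajectory into excursions (a possible initial excursion from $|\xi|=k>m$, plus one excursion above $m$ after each visit to $m$); a short bookkeeping check using the bound on $V$ gives at most $m+1$ excursions in total. During each excursion the conditional generator coincides with the unconditional one and $\a$'s death rate equals $1$, so the expected per-excursion cost $\E\int_0^{T\wedge\tau'}N(t)^{-1}\,dt$ is dominated by the Poisson Stein factor $K_1$, obtainable by combining the elementary bound $1/N(t)\le 1/m$ with a Brown--Xia-style logarithmic estimate via Lemma~\ref{zmz0} and the decomposition $Z_\xi=Z_0+D_\xi$. Summing yields $B\le (m+1)K_1$, and combining with $A\le (m+1)/(\L m)$ gives $|I_1|\le (m+1)[1/(\L m)+K_1]$. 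For the refined bound under $\L>m+1$, I would sharpen the excursion-return estimate to $q_{m+1}\ge(\L-m)/\L$ by a direct analysis of the first-step recursion, exploiting the positive drift at $m+1$ once $\L$ exceeds $m$, and rerun the same decomposition to obtain the factor $\L/(\L-m)$ in place of $m+1$. The main obstacle is the $B$-estimate: matching the per-excursion cost cleanly to the Poisson Stein factor $K_1$ in the conditional setting requires careful accounting, and the sharper lower bound on $q_{m+1}$ in the $\L>m+1$ regime is considerably more delicate than the symmetry argument yielding $q_{m+1}\ge 1/(m+1)$.
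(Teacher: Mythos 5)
Your coupling (shared clock for the pair $\{\a,U\}$, matched immigration and matched deaths of the other particles) is the same as the paper's, and your split of $\E\int_0^\tau N(t)^{-1}\,dt$ into the time at level $m$ and the time above $m$ is an excursion-counting restatement of the paper's renewal recursion
$|I_1| \le \tfrac{1}{\L m} + K_1 + p_{\L,m}|I_1|$: your $V$ is geometric with success probability $q_{m+1}=1-p_{\L,m}$, so $\E V = 1/q_{m+1}$, and $\E V(\tfrac{1}{\L m}+K_1)$ reproduces exactly the solved recursion. Your zero-immigration coupling for $q_{m+1}\ge 1/(m+1)$ is a genuinely different, and rather cleaner, argument than the paper's (the paper conditions on the last death event before the return to $m$ and uses exchangeability there); both are correct. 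Two smaller points: the assertion ``at most $m+1$ excursions in total'' should be ``$\E V\le m+1$'', since $V$ is unbounded and what you actually need is the Wald-type accounting $B\le K_1\E V$, and you should say explicitly that the per-excursion cost bound $\le K_1$ is uniform in the configuration at the start of the excursion (it is, because Lemma~\ref{zmz0} bounds below by $|Z_0(t)|$ regardless of the starting state), which is what lets you sum.

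The genuine gap is the refined estimate $q_{m+1}\ge(\L-m)/\L$, equivalently $p_{\L,m}\le m/\L$, which you claim follows ``by a direct analysis of the first-step recursion, exploiting the positive drift at $m+1$.'' This is not a drift computation. The first-step recursion
$(\L+k)p_k = \L p_{k+1} + (k-1)p_{k-1}$, $p_m=1$, on $k\ge m+1$
has no elementary closed form, and the positive-drift heuristic only suggests the order of magnitude. The paper does not solve this recursion directly either; it invokes Lemma~\ref{alphadeath}, whose proof reduces $p_{\L,m}$ to $1-\tfrac{1}{m+1}\E(|Z_0(\tau_m)|+1)$ by exchangeability of the $m+1$ initial particles, and then imports the exact identity $\E|Z_0(\tau_m)| = -1+(m+1)\bigl(\bar F(m-1)/\bar F(m) - m/\L\bigr)$ from Brown \& Xia (Eq.~5.17), from which $p_{\L,m}\le m/\L$ drops out since $\bar F(m-1)/\bar F(m)\ge 1$. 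Without an argument of comparable strength (either that identity, or an explicit solution of the birth--death first-passage recursion), the $\L>m+1$ half of the lemma is not established by your outline.
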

\begin{proof}
If we pair the death times of $U$ in $\zxt\dot$ with $\a$ in $\zxaut\dot$, then both $\zxaut\dot$ and $\zxt\dot$ can be coupled in such a way that their birth times and death times are matched. Note that we get a complete coupling, i.e. the two processes become identical, when the particle at $\a$ and the corresponding $U$ leave the systems. Noting that the time of death for the particle at $\a$, $T_\a$, satisfies $\Pr\left(T_\a > t \big| |\zxu|\right) = \exp \{ -\int_0^t (1 - \ind_{|\zxut(s)| = m-1} )ds \}$, it can be seen that as the death rate for the particle at $\a$ `turns off' when there are $m$ particles in the system, if $\xi$ is larger, then the point at $\alpha$ is more likely to die sooner. Therefore it suffices to consider the worst case scenario $|\xi| = m$. Define
\begin{align*}
S &:= \inf\{ t : |Z_{\xi + \d_\a - \d_U}^\resm(t)| = m+1\}\sim \exp(\L),\\
T &:= \inf\{ t : |Z_{\xi + \d_\a - \d_U}^\resm(t)| = m, t > S\}.
\end{align*}
Using the strong Markov property, 
\begin{align}
&\left| \int_0^\infty  \left[ \E f(Z_{\xi + \d_\a - \d_U}^\resm(t)) - f(\zx) \right] dt \right| \notag\\
&= \left| \E \int_0^{T_\a}  \left[  f(Z_{\xi + \d_\a - \d_U}^\resm(t)) - f(\zx) \right] dt \right| \notag\\
	&\leq \E \int_0^{S}\left| f(Z_{\xi + \d_\a - \d_U}^\resm(t))-  f(\zx)\right|dt\notag\\
	&\ \ +\E \int_{S}^{T} \left|  f(Z_{\xi + \d_\a - \d_U}^\resm(t)) -  f(\zx) \right| \ind_{T_\a > t}\ dt  + \Pr(T_\a > T) |I_1|.\label{thing}
\end{align}
For the first integral of \Ref{thing}, recalling the choice of $\cF_{\bar{d}}$ for our metric,
\begin{align*}
&\E \int_0^{S} \left| f(Z_{\xi + \d_\a - \d_U}^\resm(t))-  f(\zx) \right| dt \leq \E \int_0^{S} \frac{1}{m} dt= \frac{1}{m} \E S = \frac{1}{\L m}.
\end{align*}
For the second integral in \Ref{thing},
\begin{align*}
&\E \int_{S}^{T} \left|  f(Z_{\xi + \d_\a - \d_U}^\resm(t)) -  f(\zx) \right| \ind_{T_\a > t}\ dt\\
	& = \E \int_{S}^{T} \left|  f(Z_{\xi - \d_U}^{(m-1)}(t) + \d_\a) -  f(Z_{\xi - \d_U}^{(m-1)}(t) + \d_U) \right| \ind_{T_\a > t}\ dt\\
	&\leq \E \int_{S}^T \frac{1}{|Z_{\xi - \d_U}^{(m-1)}(t)|+1} \cdot e^{-(t-S)} dt \\
	&\leq \E \int_{S}^\infty \frac{1}{|Z_{\xi - \d_U}^{(m-1)}(t)|+1} \cdot e^{-(t-S)} dt \\
	&\leq \int_0^\infty e^{-t} \E \Bigg{[}\frac{1}{|Z_{Z_{\xi - \d_U}^{(m-1)}(S)}^{(m-1)*}(t)|+1}\Bigg{]}dt \leq K_1,
\end{align*}
where we have used the strong Markov property, $Z_\xi^{(m-1)*}\dot$ is an independent process that follow generator $\cA^{(m-1)}$, Lemma~\ref{zmz0} in the last inequality, and that the death rate for the particle $\a$ is 1 in the time interval $[S,T]$. Noting that that we have $\Pr(T_\a > T) =p_{\L,m}$, using Lemma~\ref{alphadeath} in the following equation yields the result.
\[ |I_1| \leq \frac{1}{\lambda m} + K_1 + p_{\lambda,m}|I_1|.\]
\end{proof}
\begin{lemma}\label{lemmaI2}
For $m \geq 1$, define
\[I_2(\xi) := \int_0^\infty \left[ \E f(\zxa) - \E f(\zx + \d_\a \ind_{\tau_\a>t})\right]dt,\]
where $\tau_\a$ is defined as in \Ref{taua}. Let $|I_2| := \sup_{\xi : |\xi| \geq m} |I_2(\xi)|$, then
\begin{align*}
	|I_2| \leq \frac{1}{\L} + mK_1.
\end{align*}
If $\L > m+2$, then
\begin{align*}
|I_2| &\leq \frac{1}{\L(\L - m)} + \frac{m}{\L - m} K_1.
\end{align*}
\end{lemma}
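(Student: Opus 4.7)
The plan is to construct a coupling between $\zxat\dot$ and $\zxt\dot+\d_\a\ind_{\tau_\a>\dot}$ that keeps the two processes identical until a single divergence event, and then to apply the strong Markov property together with Lemma~\ref{lemmaI1} to control the tail.

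Concretely, I would couple so that all births in $\zxt$ are mirrored in $\zxat$ and every death of a non-$\a$ particle of $\zxat$ is matched with the death of the same particle in $\zxt$. While $|\zx|>m$, the hazard rate of $\tau_\a$ is $1$, equal to the death rate of $\a$ in $\zxat$, and I would synchronise these two events. At the boundary $|\zx|=m$, however, $\zxt$ is frozen while $\zxat$ (of size $m+1$) loses a uniformly chosen particle at total rate $m+1$, matching the boosted hazard of $\tau_\a$; I would synchronise the firing of $\tau_\a$ with this aggregate death. If the particle happens to be $\a$ (probability $1/(m+1)$) the two processes coalesce to the current $\zxt$; otherwise a uniform $U\in\zxt$ dies in $\zxat$ while $\tau_\a$ still fires, so at this time $T$ the processes diverge, with $Z_{\xi+\d_\a}^\resm(T)=Z_\xi^\resm(T^-)+\d_\a-\d_U$ and $Z_\xi^\resm(T)+\d_\a\ind_{\tau_\a>T}=Z_\xi^\resm(T^-)$ of common size $m$.

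The integrand of $I_2$ vanishes on $[0,T]$ (either the coupling still holds, or $\a$ has died naturally and both processes equal the current $\zxt$), so by the strong Markov property the tail integral from $T$ onwards is exactly an instance of $I_1$ evaluated at $Z_\xi^\resm(T^-)$ with a uniform removal. Hence $|I_2|\leq\Pr(\text{divergence})\cdot|I_1|$. Starting from $|\xi|>m$, $\zxat$ must first hit size $m+1$, during which $\a$ has additional independent opportunities to die naturally (each producing coalescence), so the divergence probability is maximised at $|\xi|=m$, where it equals exactly $p_{\L,m}$ of Lemma~\ref{alphadeath} and is therefore bounded by $\min(m/\L,\,m/(m+1))$.

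Pairing the two bounds on $p_{\L,m}$ with the two matching bounds on $|I_1|$ from Lemma~\ref{lemmaI1} then yields the two claimed inequalities: $p_{\L,m}\leq m/(m+1)$ together with $|I_1|\leq(m+1)\bigl[\frac{1}{\L m}+K_1\bigr]$ produces the uniform bound $\frac{1}{\L}+mK_1$, and, under $\L>m+2$, $p_{\L,m}\leq m/\L$ together with $|I_1|\leq\frac{1}{m(\L-m)}+\frac{\L}{\L-m}K_1$ produces the sharper bound. The main technical obstacle is verifying that the coupled joint process has the correct marginal laws across the threshold $|\zx|=m$; this reduces to checking that the synchronised $\tau_\a$ retains its prescribed conditional hazard rate on both sides, which is immediate from matching the infinitesimal rates in the two regimes.
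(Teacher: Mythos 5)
Your proof is correct and takes essentially the same route as the paper: couple $\zxat\dot$ with $\zxt\dot+\d_\a\ind_{\tau_\a>\cdot}$ identically until a non-$\a$ death fires in $\zxat\dot$ while $|\zxt\dot|=m$, reduce to the worst case $|\xi|=m$, and bound the post-divergence tail by $|I_1|$. The only difference is in packaging: the paper conditions on the first transition and solves the linear recursion $|I_2|\leq\frac{m}{m+1+\L}|I_1|+\frac{\L}{m+1+\L}\,p_{\L,m+1}\,|I_2|$, whereas you identify the total divergence probability directly as $p_{\L,m}$ (the probability $\a$ survives the first return of $\zxat\dot$ to size $m$, i.e.\ Lemma~\ref{alphadeath} with $k=m$), giving $|I_2|\leq p_{\L,m}|I_1|$ in one step; since $p_{\L,m}=m/(m+1+\L-\L\,p_{\L,m+1})$, both routes produce the same final bounds.
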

\begin{proof}
As long as $|\zxat\dot| > m+1$ and $|\zxt\dot + \d_\a \ind_{\tau_\a>\cdot}| > m+1$, the death rates for both processes at all points are identical, and we can couple the two processes identically. A problem occurs if we reach the state where there are only $m+1$ points in both systems, and the point at $\a$ is still alive. In $\zxat\dot$, each particle has per capita death rate, but in $Z_{\xi + \d_\a}^\resm\dot + \d_\a \ind_{\tau_\a>\cdot}$, only the point at $\a$ has death rate $m+1$. This scenario is where the two processes could diverge. 



Similarly to Lemma~\ref{lemmaI1} it suffices to consider the case $|\xi| = m$. We decompose $|I_2(\xi)|$ by considering the first transition of both processes. Our final coupling time will be when the point at $\a$ is dead in both processes. We can define a coupling such that
\begin{itemize}
\item With probability $\frac{1}{m+1+\L}$ the first transition will be a death and $\a$ will be the point chosen to die from both $\zxat\dot$ and $Z_{\xi}^\resm\dot + \d_\a \ind_{\tau_\a>\cdot}$.
\item With probability $\frac{m}{m+1+\L}$ the first transition will be a death, and $\a$ in $Z_{\xi}^\resm\dot + \d_\a \ind_{\tau_\a>\cdot}$ will die, but a uniformly selected point $U$ from $\xi$ of $\zxat\dot$ will die.
\item With probability $\frac{\L}{m+1+\L}$ the first transition will be an immigration. 
\end{itemize}
If the first transition is immigration, we can couple the points of two processes exactly until the return to $m+1$ particles. If the point at $\a$ dies before the return to $m+1$ particles, then the coupling is complete, if not, then we essentially return to the initial starting state.  Therefore,
\begin{align}
|I_2(\xi)| &\leq \frac{m}{m+1+\L} \left| \int_0^\infty \left[ \E f(Z_{\xi + \d_\a - \d_U}^\resm(t)) - f(\zx) \right] dt \right| + \frac{\L}{m+1+\L} \cdot p_{\L,m+1} |I_2|,\label{I2bit}
\end{align}
where $p_{\L,m+1}$ is defined as in Lemma~\ref{alphadeath}. Using Lemma~\ref{alphadeath} and noting that the integral in \Ref{I2bit} can be bounded by $|I_1|$ in Lemma~\ref{lemmaI1}, some rearrangement yields the lemma.
\end{proof}
We are now ready to complete the proof of Theorem \ref{firstdiff}.
\begin{proof}[Proof of Theorem \ref{firstdiff}]
Lemma \ref{lemmaI2} accounts for the first half of \Ref{split}. For the second integral of \Ref{split}, 
\begin{align*}
&\left|\int_0^\infty \left[ \E f(\zx + \d_\a \ind_{\tau_\a>t}) - \E f(\zx) \right] dt\right|\\
&= \left|\int_0^\infty \E \left[ f(\zx + \d_\a) - f(\zx) \middle| \tau_\a > t \right] \Pr(\tau_\a > t) dt\right|\\
&\leq \int_0^\infty\E \left[ \frac{1}{|Z_\xi^{(m)}(t)|+1} \right] e^{-t} dt \leq K_1,
\end{align*}
where $\tau_\a$ is defined as in \Ref{taua}. 
The first bound in $K_1$ is true as $|Z_\xi^{(m-1)}(t) + 1| \geq m$. For the other bound, using Lemma~\ref{zmz0},
\begin{align*}
\E \left[ \frac{1}{|Z_\xi^{(m-1)}(t)| + 1} \right] \leq \E \left[ \frac{1}{|Z_0(t)| + 1} \right],
\end{align*}
and the bound for $\int_0^\infty e^{-t} \E  \left[ \frac{1}{|Z_0(t)| + 1} \right] dt$ can be found in Schuhmacher \& Xia~\cite{SX08}. Putting everything together we achieve the final bound.
\end{proof}
\begin{remark}\label{sameasuncond}
In the proof of Lemma~\ref{lemmaI2}, if $m=0$ and $|\xi| = 0$, notice that the probability of the first transition being a death, but $\a$ not dying from $Z_{\xi + \d_\a}^{(0)}$ is 0. Therefore, the processes never diverge and hence $|I_2| = 0$. Furthermore, this implies $\|\D h^{(0)}\| \leq K_1$ (albeit with the small modification of using $1$ for the constant term instead of $\frac{1}{m}$), consistent with the unconditional bounds of Schuhmacher \& Xia~\cite{SX08}.
\end{remark}

\begin{remark}\label{sameasuncond2}
It is worth comparing the above bound to bounds in the unconditional case. Proposition~4.1 from Schuhmacher \& Xia~\cite{SX08} gives
\begin{align}
\|\D h^{(0)} \| \leq \min\left(1, \frac{0.95 + \log^+\L}{\L}\right). \label{SXfirstdiff}
\end{align}
Therefore our bound in the conditional case is generally slightly worse than the bounds in the unconditional case. However, for large $\L$, the bound is asymptotically the same as $K_1$, so it appears that the additional term is not too bad as long as $\L$ is of a reasonable size. 
\end{remark}

When $\L$ is small, the bounds in Theorem~\ref{firstdiff} are large and therefore may not be useful in applications. In the unconditional scenario, when $\L$ is small, the constant bound of $1$ is used. An important question is, does there exist a $\L$-independent bound for $\|\Delta h^\resm\|$ like in the unconditional case? The answer to this appears to be no. Consider
\begin{align*}
\left| \int_0^\infty \E \left[ f(\zxa) - f(\zx) \right] dt \right|,
\end{align*}
with a starting configuration such that $\left|\xi\right| = m$ and $\L$ is very small. The first transition for $|\zxat\dot|$ is going to be a death with probability $\frac{m+1}{m+1+\L}$, or an immigration with probability $\frac{\L}{m+1+\L}$. Given that $\L$ is small, this implies that the first transition is almost certainly going to be a death, and with probability $\frac{m}{m+1}$ the particle chosen for death is not going to be $\a$. Meanwhile $\zxt\dot$ is going to be unchanged with high probability as the only possible transition is an immigration step upwards which occurs with rate $\L$. We are therefore likely to reach a state where the processes are differing by a single pair of particles, but the expected time until the next transition is exactly the expected time until the next immigration, $\frac{1}{\L}$. Hence, it appears there will unavoidably be a $\L$-dependent component in the bound. This problem does not occur in the unconditional case as the death rate for the point at $\a$ is always 1, it does not `turn off' as it does in our conditional scenario.


\subsection{Bounds for the second difference of $h$}\label{CPPPseconddiff}
We now seek to extend our approximate coupling idea to calculate bounds for $\|\D^2h^\resm\|$? Again, we would like to use the canonical couplings of the form
\begin{align*}
\zxab &\stackrel{d}{=} \zx + \d_\a \ind_{\tau_\a > t} + \d_\b \ind_{\tau_\b > t},\\
\zxa &\stackrel{d}{=} \zx + \d_\a \ind_{\tau_\a > t},\\
\zxb &\stackrel{d}{=} \zx + \d_\b \ind_{\tau_\b > t},
\end{align*}
for some waiting times $\tau_\a$ and $\tau_\b$, and again the conditioning induces complications. In the unconditional case, we can define $\tau_\a$ and $\tau_\b$ as two independent exponential random variables with rate 1. For exactly the same reasons outlined in the previous subsection, there do not exist any waiting times $\tau_a, \tau_b$ that satisfy the coupling above. As a result, we instead approach the problem by again using `approximate' couplings from before to enable us to maintain some semblance of independence.

Given the filtration of $|\zxt\dot|$, we choose to define $\tau_\a$ and $\tau_\b$ such that $\tau_\a$ and $\tau_\b$ are conditionally independent copies of the waiting time with distribution as in \Ref{taua} . This approach presents not only similar complications as discussed earlier for the first difference, but also gives different net death rates for $\zxabt\dot$ when compared to $\zxt\dot + \d_\a \ind_{\tau_\a > \cdot} + \d_\b \ind_{\tau_\b > \cdot}$. Consider
\begin{align}
&\int_0^\infty \E \left[ f(\zxab) - f(\zxa) - f(\zxb) + f(\zx) \right] dt\notag\\
	&= \int_0^\infty \E \left[ f(\zxab) - f(\zx + \d_\a \ind_{\t_\a > t} + \d_\b \ind_{\t_\b > t}) \right.\notag\\
	&-  f(\zxa) + f(\zx + \d_\a \ind_{\t_\a > t}) - f(\zxb) + f(\zx + \d_\b \ind_{\t_\b > t}) \Big] dt\notag\\
	&+ \int_0^\infty \E \left[ f(\zx + \d_\a \ind_{\t_\a > t} + \d_\b \ind_{\t_\b > t}) - f(\zx + \d_\a \ind_{\t_\a > t})\right.\notag\\
	&\phantom{VVVVV}- \left. f(\zx + \d_\b \ind_{\t_\b > t}) + f(\zx) \right] \label{fourparts}dt.
\end{align}
In \Ref{fourparts}, the absolute value of last integral can be shown to be bounded by existing unconditional bounds, similar to the way the second half of \Ref{split} was bounded by its unconditional equivalent. Therefore, the main work is to find a bound for the first integral of \Ref{fourparts}.

\begin{lemma}\label{lemmaI3}
For $m \geq 1$, define,
\begin{align}
I_3(\xi) :=& \int_0^\infty \E \left[ f(\zxab) - f(\zx + \d_\a \ind_{\t_\a > t} + \d_\b \ind_{\t_\b > t}) \right.\notag\\
	&-  f(\zxa) + f(\zx + \d_\a \ind_{\t_\a > t}) \notag\\
	&- f(\zxb) + f(\zx + \d_\b \ind_{\t_\b > t}) \Big] dt\label{six}
\end{align}
where $\tau_\a$ and $\tau_\b$ are defined as in~\Ref{taua}. Let $|I_3| := \sup_{\xi:|\xi| \geq m} |I_3(\xi)|$, then
\begin{align*}
|I_3| &\leq \frac{(4m+3)(m+3)}{(m+3)(2m+2)\lambda + 2\lambda^2} + \frac{4m(m+1)(m+3)}{(m+3)(2m+2) + 2\lambda} K_1.
\end{align*}
Furthermore, if $\L > m+2$,
\begin{align*}
|I_3| & \leq \frac{3\L + m}{\L(\L-m)(\L+m)} + \frac{4\L m}{(\L-m)(\L+m)}K_1.
\end{align*}
\end{lemma}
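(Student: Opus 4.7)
The plan is to follow the template of Lemmas~\ref{lemmaI1} and \ref{lemmaI2}. First I would use the same monotonicity argument---the extra death rates on the particles at $\a$ and $\b$ only activate when $|\zxt\dot|$ reaches its floor $m$---to reduce to the worst case $|\xi|=m$, in which $|\zxab(0)|=m+2$. Next I would rewrite the six-term integrand in \Ref{six} by adding and subtracting $f(\zx)$, exposing it as a \emph{real} second difference $f(\zxab)-f(\zxa)-f(\zxb)+f(\zx)$ on the four coupled conditional processes minus a \emph{virtual} second difference on the approximate paths. A four-case check on the indicators $\ind_{\tau_\a>t},\ind_{\tau_\b>t}$ shows the virtual second difference collapses to $[f(\zx+\d_\a+\d_\b)-f(\zx+\d_\a)-f(\zx+\d_\b)+f(\zx)]\cdot \ind_{\tau_\a>t,\tau_\b>t}$, so it vanishes as soon as either $\a$ or $\b$ has died.

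I would then construct a simultaneous coupling of the four processes $\zxab,\zxa,\zxb,\zx$ in the spirit of Lemma~\ref{lemmaI2}: whenever all four processes have strictly more than $m$ points, every immigration and every death is matched across the four processes, the real second difference reduces to $\D^2 f(\zx;\a,\b)\ind_{\tau_\a>t,\tau_\b>t}$, and the integrand is identically zero. Divergence between the real and virtual sides therefore only arises at the boundary where $|\zx|=m$ but the particles at $\a$ and/or $\b$ are still alive. With $|\xi|=m$ I would condition on the type of the first transition of $\zxab$: an immigration (rate $\L$), the death of the point at $\a$ (rate $1$), the death of the point at $\b$ (rate $1$), or the death of some $U\in\xi$ (aggregate rate $m$). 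The immigration and $\a/\b$-death transitions either restart the coupling in a resolved state or reduce the problem to a three-process comparison already bounded by $|I_2|$; the crux is the $U$-death case, where $\zxab$ loses $U$ while $\zx$ cannot, which is the only mechanism by which the real and virtual sides actually pull apart.

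In the $U$-death case I would apply the strong Markov property at the first subsequent return of $|\zxab|$ to $m+2$ particles (or the first death of $\a$ or $\b$, whichever comes first), together with the decomposition of $\zxabu$ used in Lemma~\ref{lemmaI2}, to express the remaining contribution as an explicit multiple of $|I_1|$, plus a waiting-time contribution of order $1/\L$, plus a feedback term $p|I_3|$; here $p<1$ is the probability given by Lemma~\ref{alphadeath} that both $\a$ and $\b$ are still alive at the next return to $m+2$ particles. Collecting all four first-transition contributions produces a recursive inequality $|I_3|\le A(\L,m)+p|I_3|$ with $A(\L,m)$ an explicit linear combination of $|I_1|$ from Lemma~\ref{lemmaI1} and the constant $K_1$; solving and substituting the bounds of Lemmas~\ref{lemmaI1} and \ref{lemmaI2} yields the first displayed bound, while repeating with the sharper $\L>m+2$ alternatives in those lemmas yields the second. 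The main obstacle is the bookkeeping in the $U$-death case: since $\zxa$ and $\zxb$ still have $m+1$ points and so, unlike $\zx$, can lose $U$ and drop to $m$, the post-transition four-process second difference must be re-expressed in forms that $|I_1|$ and $|I_2|$ are designed to control, and it is precisely this accounting that produces the somewhat unusual constants $4m+3$, $m+3$, and $4\L m$ appearing in the statement.
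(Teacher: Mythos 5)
Your high-level skeleton matches the paper's: reduce to the worst case $|\xi|=m$, observe that the integrand only contributes while $\zxt$ sits at the floor, condition on the first transition, bound that first transition's residual contribution via $|I_1|$ and $\|\D h^\resm\|$, close the recursion using $p^{(2)}_{\L,m+2}\le\min\{\tfrac{m+1}{m+3},\tfrac{m+2}{\L}\}$ from Lemma~\ref{alphadeath}, and substitute the bounds of Lemmas~\ref{lemmaI1} and Theorem~\ref{firstdiff}. However, there is a genuine gap in the mechanism you propose for the first transition, and it matters for the exact constants.

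You condition on the first transition of $\zxab$, which has total rate $\L+m+2$. This ignores the fact that the approximate process $\zxt+\d_\a\ind_{\t_\a>t}+\d_\b\ind_{\t_\b>t}$ has a strictly larger net death rate: when $|\zxt|=m$, the $\a$- and $\b$-particles in the approximate path each carry death rate $m+1$, for a total of $2m+2$. The paper therefore couples all \emph{six} processes (not just the four conditional ones) against a fictional clock of rate $\L+2m+2$ and carefully allocates which particle dies in which of the six processes across four explicit cases. Crucially, the joint law across the six terms is \emph{not} constrained---only the marginals feeding each $\E$ term---which is what permits, e.g., the case in which $\a$ dies in $\zxt+\d_\a\ind+\d_\b\ind$ while nothing dies in $\zxab$. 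Two of the four cases cancel exactly, and the surviving $U$-death case is bounded via $3|I_1|+\|\D h^\resm\|$. Your four-process coupling and $\zxab$-driven conditioning cannot reach the ``excess $m$'' death rate in the approximate process, so it would not produce the recursion $|I_3|\le\frac{m}{\L+2m+2}\big[3|I_1|+\|\D h^\resm\|\big]+\frac{\L}{\L+2m+2}p^{(2)}_{\L,m+2}|I_3|$ with denominator $\L+2m+2$, and hence would not yield the stated coefficients $(4m+3)(m+3)$, $4m(m+1)(m+3)$, and $(m+3)(2m+2)+2\L$. To repair the argument you would need to enlarge the coupling to all six processes (or otherwise account for the approximate paths' larger death rate) and track the sub-cases as the paper does, including the cancellation between the two $\a$/$\b$-death cases.

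One further small inaccuracy: you describe $A(\L,m)$ as a linear combination of $|I_1|$ and $K_1$; in fact it is the combination $3|I_1|+\|\D h^\resm\|$, with $K_1$ entering only after substituting Theorem~\ref{firstdiff}'s bound for $\|\D h^\resm\|$.
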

\begin{proof}
To begin, note as before, it suffices to assume that $|\xi|=m$ as the worst case scenario. We will decompose $|I_3|$ by conditioning upon the first transition of the processes and assume $|\xi| = m$.

The major complication is that the net death rates of the first transition are not the same for the six processes in \Ref{six}. For the six processes the net death rates are $m+2,\ 2m+2,\ m+1,\ m+1,\ m+1,\ m+1$ in the order given by \Ref{six}. To deal with this, we define the following coupling for the first transition time $T$. The first transition will occur after an exponential time with rate parameter $\L + 2m + 2$. We need to carefully take note of which particles die in each of the six processes given in the order of \Ref{six}.
\vspace{-0.2cm}

Case 1: With probability $\frac{m}{\L + 2m + 2}$,
$U$ dies, $\b$ dies,
$U$ dies, $\a$ dies,
$U$ dies,  $\b$ dies, where $U$ is chosen uniformly from the points in $\xi$.

Case 2: With probability $\frac{1}{\L + 2m + 2}$,
$\a$ dies,  $\a$ dies,
$\a$ dies,  $\a$ dies,
$\b$ dies,  nothing dies.

Case 3: With probability $\frac{1}{\L + 2m + 2}$,
$\b$ dies, $\b$ dies,
nothing dies, nothing dies,
nothing dies, $\b$ dies.

Case 4: With probability $\frac{m}{\L + 2m + 2}$,
nothing dies, $\a$ dies,
nothing dies, nothing dies,
nothing dies, nothing dies.

And finally with probability $\frac{\L}{\L + 2m + 2}$ an immigration occurs at the same location for all the processes. If the first transition is an immigration step, then we can couple each successive pair of processes (in the order of \Ref{six}) exactly, until the process $\zxabt\dot$ returns to a state with $m+2$ particles. Furthermore, for points that exist in more than one pair of processes, they can also be coupled across the pairs. For example, the point at $\a$ exists in the first, second, third and fourth processes, so we couple the death time of $\a$ to be the same for all four processes. When $|\zxabt\dot|$ first returns to a state with $m+2$ particles we need only check if the points at $\a$ and $\b$ have died or not. If one of them has died, then the integrand in \Ref{six} becomes zero immediately. 

Simple calculations show that case 2 and case 3 cancel each other out exactly, upon some further simplification, and noting that the integrand of $I_3$ contributes nothing until the first transition each pair of processes have the same configurations,
\begin{align*}
|I _3| \leq &\frac{m}{\L + 2m + 2} \left[3 |I_1| + \| \Delta h^\resm \|\right] +  \frac{\L}{\L + 2m + 2} p^{(2)}_{\L, m+2} |I_3|,
\end{align*}
where $p^{(2)}_{\L, m+2}$ represents the probability that given the first transition was an immigration step, the points $\a$ and $\b$ are both still alive upon the first return to $m+2$ particles for the process $\zxabt\dot$. Noting that $p^{(2)}_{\L, m+2} \leq p_{\L, m+2}$ from Lemma~\ref{alphadeath} as the event that both the points at $\a$ and $\b$ survive is a subset of the event that the point at $\a$, survives $p^{(2)}_{\L, m+2} \leq \min\{ \frac{m+1}{m+3},\frac{m+2}{\L}\}$ (the $\frac{m+1}{m+3}$ comes from the fact that both points need to survive at least once death event), the bounds in Lemmas \ref{lemmaI1} and \ref{firstdiff} give the final result.
\end{proof}
We now have everything we need to complete the proof of Theorem~\ref{seconddiff}.
\begin{proof}[Proof of Theorem~\ref{seconddiff}]
The first set of bounds are simply twice the bounds for the first difference given in Theorem~\ref{firstdiff}. 

For the remaining bounds, recall \Ref{fourparts}. 
Lemma~\ref{lemmaI3} gives a a bound for the first integral. We now need only examine the last integral. 
\begin{align}
&\left|\int_0^\infty \E \left[ f(\zx + \d_\a \ind_{\t_\a > t} + \d_\b \ind_{\t_\b > t}) - f(\zx + \d_\a \ind_{\t_\a > t})\right.\right.\notag \\
	&\left.\phantom{VVVVV}- \left. f(\zx + \d_\b \ind_{\t_\b > t}) + f(\zx) \right]dt\right|\notag\\
	&=\left| \int_0^\infty \E \left[ f(\zx + \d_\a + \d_\b) - f(\zx + \d_\a)\right. \right.\notag\\
	&\left. \left. \phantom{VVVVV}- f(\zx + \d_\b)+ f(\zx)\middle| \tau_\a > t, \tau_\b > t \right]\Pr(\tau_\a > t, \tau_\b > t) dt\right|\notag\\
	&\leq \int_0^\infty e^{-2t} \E \left[ \frac{1}{|\zx| + 2} + \frac{1}{|\zx|+1} \right] dt\label{constantone}\\
	&\leq \int_0^\infty e^{-2t} \E \left[ \frac{1}{|Z_\xi(t)| + 2} + \frac{1}{|Z_\xi(t)| + 1} \right] \label{parttwo}dt,
\end{align}
where in the first inequality we have used that the conditional independence of $\tau_\a$ and $\tau_\b$ given the natural filtration of $\zxt\dot$,
\[ \Pr\left(\tau_\a>t, \tau_\b > t \left| |\zxt\dot|\right)\right. = \exp \left\{-\int_0^t (2 + 2 m \ind_{|\zxt(s)| = m}) ds \right\} \leq e^{-2t},\]
and for the second inequality we have used Lemma~\ref{zmz0}.

We can bound \Ref{constantone} by
\begin{align*}
\int_0^\infty& e^{-2t} \E \left[ \frac{1}{|\zx| + 2} + \frac{1}{|\zx|+1} \right]dt \\
	&\leq \int_0^\infty e^{-2t}\left[ \frac{1}{m+2} + \frac{1}{m+1}\right] dt\\
	&= \frac{2m+3}{(m+1)(m+2)} \cdot \frac{1}{2} < \frac{1}{m+1}.
\end{align*}
Schuhmacher \& Xia~\cite{SX08} bound the quantity in \Ref{parttwo} with ${\frac{2\log \L}{\L}}$ when ${\L\geq 1.76}$. Recalling our definition
\[ K_2 := \frac{2\log \L}{\L} \ind_{\L \geq 1.76} + \frac{1}{m+1} \ind_{\L < 1.76}. \]
We therefore have,
\begin{align}
\|\D^2 h^\resm\| \leq |I_3| + K_2,\label{D2parts}
\end{align}
and the bound now follows by using the bound from Lemma~\ref{lemmaI3}.
\end{proof}

An interesting question is whether there are equivalent statements to Remarks~\ref{sameasuncond} and \ref{sameasuncond2} for the second difference of $h$. 

If $m=0$, then $\tau_\a$ and $\tau_\b$ are independent exponential random variables, and hence it is easily seen that $|I_3|=0$. Therefore, this approach is consistent with the unconditional bounds of Schuhmacher \& Xia~\cite{SX08}. 

The answer to the question of whether there exists a $\L$-independent bound for $\|\D h^\resm \|$ is the same as for the first difference. Using the same arguments as in Remark~\ref{sameasuncond2} we can see that there appears to always be a need for a $\L$-dependent component.

%
%
\subsection{Non-uniform bounds for Stein factors}
The bounds in Theorems~\ref{firstdiff} and \ref{seconddiff} are both uniform in the choice of $\xi$ and are of order $\frac{\log(\L)}{\L}$, where this term comes from the unconditional bounds derived in Schuhmacher \& Xia~\cite{SX08}. Using a counterexample based on Brown \& Xia~\cite{BX95}, Schuhmacher \& Xia~\cite{SX08} show that the logarithmic terms in these bounds can not be removed if we want to use uniform Stein factors. In Brown, Weinberg \& Xia~\cite{BWX00}, the logarithmic terms are removed from Stein factors in the $d_2$ metric by allowing the Stein factors to rely upon the configurations involved, and alternate upper bounds can be derived. This argument is later simplified into a more elegant result in Brown \& Xia~\cite{BX01} and non-uniform bounds in the $\bar{d}_2$ metric are also given in Schuhmacher \& Xia~\cite{SX08}. In this subsection we will also derive non-uniform bounds for our Stein factors. Fortunately, given the lemmas we have already proven, the results are not difficult to arrive at.

Firstly, we will require the following non-uniform unconditional bounds.
\begin{lemma}[Schuhmacher \& Xia~\cite{SX08}]\label{L1L2}
\begin{align*}
\int_0^\infty e^{-t} \E \left[ \frac{1}{|Z_\xi(t)|+1} \right]dt \leq L_1 := \frac{1- e^{-(|\xi| \wedge \L)}}{|\xi| \wedge \L}.
\end{align*}
\begin{align*}
\int_0^\infty e^{-2t} \E \left[ \frac{1}{|Z_\xi(t)| + 2} + \frac{1}{|Z_\xi(t)| + 1} \right] dt \leq L_2 := \min\left\{ \frac{1}{|\xi| \wedge \L}, \frac{1.09}{|\xi| + 1} + \frac{1}{\L} \right\}.
\end{align*}
\end{lemma}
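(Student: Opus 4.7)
The plan is to exploit the explicit distributional representation of $|Z_\xi(t)|$ that follows from the coupling in Lemma~\ref{zmz0}. Since $Z_\xi(t)\stackrel{d}{=}Z_0(t)+D_\xi(t)$ with $|Z_0(t)|\sim\Po(\L(1-e^{-t}))$ independent of $|D_\xi(t)|\sim\mathrm{Bin}(|\xi|,e^{-t})$, the probability generating function of $|Z_\xi(t)|$ is
\[
\E\left[s^{|Z_\xi(t)|}\right]=e^{-\L(1-e^{-t})(1-s)}\bigl(1-(1-s)e^{-t}\bigr)^{|\xi|}.
\]
Combined with the identity $\frac{1}{k+1}=\int_0^1 s^k\,ds$, this turns each inverse-moment expectation into a one-dimensional integral to which I can apply elementary inequalities.

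For the first bound, I would interchange expectation with the $s$-integral, substitute $r=1-s$, and apply $(1-re^{-t})^{|\xi|}\leq e^{-|\xi|re^{-t}}$. This yields the pointwise estimate
\[
\E\left[\frac{1}{|Z_\xi(t)|+1}\right]\leq\int_0^1 e^{-r\phi(t)}\,dr=\frac{1-e^{-\phi(t)}}{\phi(t)},
\]
where $\phi(t):=\L(1-e^{-t})+|\xi|e^{-t}=\L+(|\xi|-\L)e^{-t}$. Because $\phi(t)\geq |\xi|\wedge\L$ for every $t\geq 0$ and $x\mapsto (1-e^{-x})/x$ is decreasing, the right-hand side is bounded by $\frac{1-e^{-(|\xi|\wedge\L)}}{|\xi|\wedge\L}$. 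Multiplying by $e^{-t}$ and integrating over $[0,\infty)$ gives $L_1$ exactly.

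For the second bound, the identity $\frac{1}{k+1}+\frac{1}{k+2}=\int_0^1 (1+s)s^k\,ds$ plays the analogous role. The same substitution and PGF bound, together with $1+s=2-r\leq 2$, produce
\[
\E\left[\frac{1}{|Z_\xi(t)|+1}+\frac{1}{|Z_\xi(t)|+2}\right]\leq\frac{2(1-e^{-\phi(t)})}{\phi(t)}\leq\frac{2}{|\xi|\wedge\L},
\]
so multiplying by $e^{-2t}$ and integrating delivers the first half of $L_2$. For the alternative form $\frac{1.09}{|\xi|+1}+\frac{1}{\L}$, the strategy is to refine the split between the binomial and Poisson contributions: the binomial bound $\E[1/(|D_\xi(t)|+1)]=(1-(1-e^{-t})^{|\xi|+1})/((|\xi|+1)e^{-t})$ contributes a term of order $1/(|\xi|+1)$ after direct integration, while $\E[1/(|Z_0(t)|+1)]=(1-e^{-\L(1-e^{-t})})/(\L(1-e^{-t}))$ contributes a term of order $1/\L$; balancing the two via a suitably chosen cutoff in $t$ and optimising should produce the constant $1.09$.

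The main obstacle is precisely this two-term form in $L_2$, since no single pointwise bound on $\E[1/(|Z_\xi(t)|+j)]$ is simultaneously tight in $|\xi|$ and in $\L$; an argument that properly blends the binomial- and Poisson-dominated regimes is required, and extracting the sharp numerical constant $1.09$ requires careful estimation. Everything else reduces to routine PGF manipulation via $1-x\leq e^{-x}$ and the monotonicity of $(1-e^{-x})/x$.
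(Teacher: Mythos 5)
The paper itself does not prove this lemma: it is quoted verbatim from Schuhmacher \& Xia~\cite{SX08}, and there is no internal derivation to compare against, so your attempt is a reconstruction of a result the paper takes as known.

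Your derivation of $L_1$ and of the $1/(|\xi|\wedge\L)$ branch of $L_2$ is correct and self-contained. The PGF representation coming from the independent decomposition of $|Z_\xi(t)|$ into a Poisson$(\L(1-e^{-t}))$ piece and a Binomial$(|\xi|,e^{-t})$ piece, the identities $\frac{1}{k+1}=\int_0^1 s^k\,ds$ and $\frac{1}{k+1}+\frac{1}{k+2}=\int_0^1(1+s)s^k\,ds$, the inequality $(1-re^{-t})^{|\xi|}\leq e^{-|\xi|re^{-t}}$, the observation that $\phi(t)=\L+(|\xi|-\L)e^{-t}\geq |\xi|\wedge\L$, and the monotone decrease of $x\mapsto(1-e^{-x})/x$ combine cleanly: multiplying the resulting $t$-uniform pointwise bounds by $e^{-t}$ (respectively $e^{-2t}$, with $1+s\leq 2$) and integrating gives exactly $L_1$ and $1/(|\xi|\wedge\L)$.

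The genuine gap is the second branch of $L_2$, namely $\frac{1.09}{|\xi|+1}+\frac{1}{\L}$. You correctly identify the right ingredients---the exact binomial and Poisson inverse-moment formulas and a split of the $t$-integral between a binomial-dominated early regime and a Poisson-dominated late regime---but you neither carry out the split, choose a cutoff, nor show how the additive structure and the specific numerical constant $1.09$ emerge. This is precisely the delicate part: the additive form is not a pointwise minimum, and blending the two regimes requires an explicit optimisation that your sketch defers. As written, this branch is a plan rather than a proof. Since the host paper uses the lemma as cited rather than proved, the gap does not affect the paper's correctness, but as a stand-alone derivation your attempt is incomplete exactly where the work is hardest.
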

\begin{corollary}
For $m \geq 1$,
\begin{align*}
\|\D h^\resm_f(\xi;\a)\| \leq \frac{m+1}{|\xi| + 1} \left( \frac{1}{\L} + m L_1 \right) + L_1,
\end{align*}
and if $\L > m+2$
\begin{align*}
\|\D h^\resm_f(\xi;\a)\| \leq \frac{m+1}{|\xi|+1} \left( \frac{1}{\L (\L - m)} + \frac{m}{\L-m} L_1 \right) + L_1.
\end{align*}
For the second difference,
\begin{align*}
\|\D^2 & h_f^\resm(\xi;\a,\b)\| \leq \min\Bigg\{ \frac{2m+2}{|\xi| + 1} \left( \frac{1}{\L} + m L_1 \right) + 2L_1,\\
	&\frac{(m+2)(m+1)}{(|\xi|+2)(|\xi|+1)} \left[\frac{(4m+3)(m+3)}{(m+3)(2m+2) + 2\lambda} + \frac{4m(m+1)(m+3)}{(m+3)(2m+2)\lambda + 2\lambda^2} L_1\right] + L_2\Bigg\}.
\end{align*}
Furthermore, if $\L > m+2$,
\begin{align*}
\|\D^2 h_f^\resm(\xi;\a,\b)\| &\leq \min\Bigg\{ \frac{2m+2}{|\xi|+1} \left( \frac{1}{\L (\L - m)} + \frac{m}{\L-m} L_1 \right) + 2L_1,\\
&\ \ \ \ \frac{(m+2)(m+1)}{(|\xi|+2)(|\xi|+1)} \left[ \frac{3\L + m}{\L(\L-m)(\L+m)} + \frac{4\L m}{(\L-m)(\L+m)}L_1 \right]+ L_2\Bigg\}.
\end{align*}
\end{corollary}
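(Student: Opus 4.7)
The plan is to mirror the proofs of Theorems~\ref{firstdiff} and~\ref{seconddiff}, introducing two systematic modifications throughout. First, wherever a uniform bound $K_1$ (respectively $K_2$) arose via stochastic domination from Lemma~\ref{zmz0} followed by the unconditional estimate of Schuhmacher~\&~Xia, one instead applies the refined non-uniform bound of Lemma~\ref{L1L2} directly to the dominating unconditional process $Z_\xi$, producing the sharper quantity $L_1$ (respectively $L_2$). This replacement supplies the additive $L_1$ term coming from the second integral in~\Ref{split}, the additive $L_2$ term from the last integral in~\Ref{fourparts}, and the non-uniform analogues of every internal $K_1$ inside Lemmas~\ref{lemmaI1}, \ref{lemmaI2} and~\ref{lemmaI3}.

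Second, one has to extract an $|\xi|$-dependent prefactor in front of the contributions of $I_2$ and $I_3$, since the uniform proofs collapsed everything to the worst case $|\xi|=m$. The key observation is that in the approximate coupling of $Z_{\xi+\delta_\alpha}^{(m)}$ with $Z_\xi^{(m)}+\delta_\alpha\ind_{\tau_\alpha>\cdot}$ underlying $I_2(\xi)$, the two processes agree exactly until the first time $\tau$ at which $Z_{\xi+\delta_\alpha}^{(m)}$ hits level $m+1$ with the point at $\alpha$ still present. While the process stays above the boundary, each of the $|\xi|+1$ original particles is treated symmetrically by the uniform death mechanism, so the alive originals are jointly exchangeable at time $\tau$. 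Writing $N_{\mathrm{orig}}(\tau)$ for the number of originals still alive at $\tau$,
\[\Pr(\alpha\text{ alive at }\tau)=\frac{\E\,N_{\mathrm{orig}}(\tau)}{|\xi|+1}\le\frac{m+1}{|\xi|+1},\]
since $N_{\mathrm{orig}}(\tau)$ is bounded by the total population $m+1$ at $\tau$. The strong Markov property at $\tau$ then gives $|I_2(\xi)|\le\tfrac{m+1}{|\xi|+1}|I_2|_{\mathrm{unif}}$, and the two-survivor analogue yields $|I_3(\xi)|\le\tfrac{(m+2)(m+1)}{(|\xi|+2)(|\xi|+1)}|I_3|_{\mathrm{unif}}$.

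With these two modifications in hand the remaining arithmetic follows the uniform proofs verbatim. The first entry in each $\min$ for the second difference comes from the crude bound $\|\Delta^2 h^{(m)}\|\le 2\|\Delta h^{(m)}\|$ applied to the new non-uniform first-difference estimate; the tighter second entry comes from the decomposition~\Ref{fourparts} combined with the non-uniform $|I_3|$-bound and the $L_2$-substitution in the last integral. The bounds in the regime $\Lambda>m+2$ are produced by the same template using the corresponding alternative estimates from Theorem~\ref{firstdiff} and Lemma~\ref{lemmaI3}.

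The main subtlety I anticipate is justifying the exchangeability step rigorously in the presence of immigration. Immigration events insert fresh non-original particles between time $0$ and $\tau$, and subsequent death events choose uniformly from the mixture of originals and immigrants; the claim that the originals nevertheless remain jointly exchangeable at every time holds because the death rule is symmetric in labels. This can be verified by induction on the jump events, or more elegantly by coupling with a uniformly random relabelling of the originals at time $0$. Once this is secured, the rest of the corollary is a routine substitution into the uniform arguments already worked out.
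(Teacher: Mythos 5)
Your proposal is correct and follows the same overall strategy as the paper: decompose as in \Ref{split} and \Ref{fourparts}, replace the uniform estimates $K_1, K_2$ by the non-uniform $L_1, L_2$ of Lemma~\ref{L1L2}, and extract a configuration-dependent prefactor $\tfrac{m+1}{|\xi|+1}$ (resp.\ $\tfrac{(m+2)(m+1)}{(|\xi|+2)(|\xi|+1)}$) from the fact that the coupled processes only diverge if the extra point(s) survive to the first hitting of level $m+1$ (resp.\ $m+2$).

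The one place you deviate is the derivation of that prefactor. You compute the survival probability at the stopping time $\tau$ in a single step by exchangeability of the $|\xi|+1$ original particles, bounding $\E N_{\mathrm{orig}}(\tau)$ by the total population $m+1$ at $\tau$; the paper instead iterates Lemma~\ref{alphadeath} through the intermediate levels $|\xi|, |\xi|-1,\ldots,m+1$, bounds each factor $p_{\L,j}\leq j/(j+1)$, and telescopes. Both yield exactly $\tfrac{m+1}{|\xi|+1}$, and your one-shot argument is arguably cleaner (notably, the exchangeability observation is the same one the paper already uses inside the proof of Lemma~\ref{alphadeath} itself). The paper's chained version has the minor advantage of making visible how a sharper $\L$-dependent prefactor could be extracted via $p_{\L,j}\leq\min\{j/\L, j/(j+1)\}$, which is what the remark following the corollary alludes to. Your concern about rigorously justifying exchangeability in the presence of immigrants is well taken, but since all deaths before $\tau$ use the unconditional uniform death rule (the process stays strictly above level $m$ on $[0,\tau)$), the label symmetry holds and either of your two suggested justifications works.
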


\begin{proof}
We again use our approximate couplings as before, and define $\tau_\a$ as in \Ref{taua}. Then
\begin{align*}
|\D h^\resm_f(\xi;\a)| &\leq \left| \int_0^\infty \E \left[ f(\zxa) - f(\zx + \d_\a \ind_{\tau_\a > t}) \right] dt\right|\\
	& \ \ \ + \left| \int_0^\infty \E \left[ f(\zx + \d_\a \ind_{\tau_\a > t}) - f(\zx) \right] dt \right|.
\end{align*}
Following the same argument as in the proof of Theorem~\ref{firstdiff}, the second integral can be bound by $L_1$. 

For the first integral, recall that in the proof of Lemma~\ref{lemmaI1}, we used the `worst case scenario' of $|\xi| = m$. To achieve a non-uniform bound, we simply relax that restriction. Consider $|I_2(\xi)|$ where $|\xi|=k > m$. The processes only diverge in the manner described in the proof of Lemma~\ref{lemmaI2} if the particle at $\a$ is alive upon reaching a state where there are $m+1$ particles in the system. Therefore
\begin{align*}
|I_2(\xi)| &= \left| \int_0^\infty \E \left[ f(\zxa) - f(\zx + \d_\a \ind_{\tau_\a > t}) \right] dt \right|\\
	&\leq p_{\L,k} \cdot p_{\L,k-1} \cdot \ldots \cdot p_{\L,m+1} |I_2|\\
	&\leq \frac{k}{k+1} \cdot \frac{k-1}{k} \cdot \ldots \cdot \frac{m+1}{m+2} |I_2|\\
	&= \frac{m+1}{k+1} |I_2|
\end{align*}
Note that we can use exactly the same proof for bounding $|I_2|$ as in Lemma~\ref{lemmaI2} but using our non-uniform bound $L_1$ instead of $K_1$. 
For $\| \D^2 h_f^\resm(\xi;\a,\b)\|$, the first bounds are simply twice the first difference for our non-uniform bounds. For the second bounds, similarly to the first difference we note that as in \Ref{D2parts},
\begin{align*}
\|\D^2h_f^\resm(\xi;\a,\b)\| \leq |I_3(\xi)| + L_2.
\end{align*}
We can then use the same argument that the processes only diverge if both particles at $\a$ and $\b$ are alive upon the first time the process $\zxabt\dot$ reaches a state with $m+2$ particles. Therefore if $|\xi| = k>m$,
\begin{align*}
|I_3(\xi)| &\leq p_{\L,k+1}^{(2)}\cdot p_{\L,k}^{(2)} \cdot \ldots \cdot p_{\L,m+2}^{(2)}|I_3|\\
	&\leq \frac{k}{k+2} \cdot \frac{k-1}{k+1} \cdot \ldots \cdot \frac{m+1}{m+3} |I_3|\\
	&= \frac{(m+2)(m+1)}{(k+2)(k+1)}|I_3|.
\end{align*}
After substituting $L_1$ and $L_2$ for $K_1$ and $K_2$ in the bounds from Theorem~\ref{seconddiff} we then get the final results.
\end{proof}
\begin{remark}
Note that we use the somewhat crude bound of $p_{\L,k} \cdot p_{\L,k-1} \cdot \ldots \cdot p_{\L,m+1} \leq \frac{m+1}{k+1}$ in the bound for $|I_1(\xi)|$ (and the similar bound for $|I_3(\xi)|$) purely for simplicity. Using Lemma~\ref{alphadeath} appropriately, a sharper $\L$-dependent bound could also be devised.
\end{remark}
\subsection{Conditional Bernoulli process approximation}
In this section we give a simple example of conditional Poisson point process approximation using Stein's method.

Let $\G= [0,1]$ with metric $d_0(x,y) = |x-y|$, and let $X_1, \ldots, X_n$ be i.i.d. Bernoulli random variables with common parameter $p$. $\Xi = \sum_{i=1}^n X_i \d_{i/n}$ defines a \emph{Bernoulli process}.

Let $T_1, \ldots, T_n$ be i.i.d.\ uniform random variables on $[0,1]$ which are independent of the $X_i$'s. $W = \sum_{i=1}^n X_i \d_{T_i}$ defines a \emph{binomial process}.

We seek to approximate a conditional Bernoulli process $\Xi^\resone$ with a conditional Poisson process $\Po^\resone(\Lb)$, conditioning upon at least $1$ atom. To achieve this, we will first compare the conditional Bernoulli process $\Xi^\resone$ with the conditional binomial process $W^\resone$, and then compare the conditional binomial process with the conditional Poisson process $\Po^\resone(\Lb)$. 
\begin{theorem}
Using the setup above, if we set $\Lb(dx) = np\ dx$,
\begin{align}
\bar{d}_2(\cL(\Xi^\resone), \Po^\resone(\Lb)) &\leq \frac{1}{1-(1-p)^n} \cdot \Bigg[ \left( \frac{1}{2n} + \frac{p}{2} \right) \wedge \frac{1}{\sqrt{3np}} \notag\\
	&\ \ \ + \frac{p + 2p(0.95 + \log^+(np))}{(0.5 \vee \sqrt{(n-1)p(1-p)})}\Bigg],\label{bound1}
\end{align}
and if $\L > 3$,
\begin{align}
\bar{d}_2(\cL(\Xi^\resone), \Po^\resone(\Lb)) &\leq \frac{1}{1-(1-p)^n} \cdot \Bigg[ \left( \frac{1}{2n} + \frac{p}{2} \right) \wedge \frac{1}{\sqrt{3np}} \notag\\
	&\ \ \ + \frac{p + np^2(0.95 + \log(np))}{(np-1)\cdot (0.5 \vee \sqrt{(n-1)p(1-p)})}\Bigg].\label{bound2}
\end{align}
\end{theorem}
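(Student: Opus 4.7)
The plan is to use the triangle inequality
\[ \bar{d}_2(\cL(\Xi^{(1)}), \Po^{(1)}(\Lb)) \leq \bar{d}_2(\cL(\Xi^{(1)}), \cL(W^{(1)})) + \bar{d}_2(\cL(W^{(1)}), \Po^{(1)}(\Lb)), \]
treating the Bernoulli-vs-binomial gap by an explicit coupling and the binomial-vs-conditional-Poisson gap by the Stein machinery developed in Section~2. The common prefactor $1/(1-(1-p)^n)$ arises from the general observation that any coupling of two unconditional point-process laws whose atom counts agree, and whose conditioning events therefore coincide, pushes forward to the conditional laws at the cost of inflating $\bar{d}_2$ by at most $1/\P(|\cdot|\geq 1) = 1/(1-(1-p)^n)$.

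For the first summand I couple $\Xi$ and $W$ using the common Bernoulli variables $X_1,\ldots,X_n$, so that $|\Xi|=|W|=\sum_i X_i$ and the two conditioning events coincide. Conditional on $\sum_i X_i = k \geq 1$ with success set $J$, the atoms of $\Xi$ sit at $\{i/n : i \in J\}$ and those of $W$ at the iid uniform locations $\{T_i : i \in J\}$, so $\bar{d}_1(\Xi, W) = k^{-1}\sum_{l=1}^k |i_{(l)}/n - T_{(l)}|$ after matching sorted atoms. Two complementary estimates give the $\wedge$ in the displayed bound: a crude displacement-from-lattice bound exploits that each $T_{(l)}$ lies within $1/(2n)$ of some $j/n$ with $j\in J$ on average, producing the $1/(2n)+p/2$ branch; alternatively, the Beta variance $\operatorname{Var}(T_{(l)}) = l(k+1-l)/((k+1)^2(k+2))$ together with Cauchy--Schwarz applied to the sum over $l$ and then an expectation over $k \sim \operatorname{Bin}(n,p)$ yields the $1/\sqrt{3np}$ branch.

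For the second summand I apply the Stein equation $\cA^{(1)} h_f^{(1)} = f - \Po^{(1)}(\Lb)(f)$ at $W^{(1)}$. Setting $\tilde{W}_i := W - X_i\delta_{T_i}$ and exploiting independence of $X_i$ from $(X_{-i}, T_1,\ldots,T_n)$, the uniformity of $T_i$, and the disjoint decomposition $\ind_{|W|\geq 1} = \ind_{|\tilde W_i|\geq 1} + X_i \ind_{|\tilde W_i|=0}$, the immigration and size-biased death terms cancel on $\{|\tilde W_i|\geq 1\}$ to leave the identity
\[ (1-(1-p)^n)\,\E\cA^{(1)} h_f^{(1)}(W^{(1)}) = np^2\,\E\!\int \Delta^2 h_f^{(1)}(\tilde W;\alpha,T_1)\, d\alpha + np^2(1-p)^{n-1}\!\int\Delta h_f^{(1)}(\emptyset;\alpha)\, d\alpha, \]
where $\tilde W := \tilde W_1$ is a binomial process with parameters $(n-1,p)$ placed at iid uniforms, independent of $T_1$, and the single-term representation follows by symmetry in $i$. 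Substituting the Stein factor bounds from Theorems~\ref{firstdiff}--\ref{seconddiff} (with $m = 1$ and $\L = np$), using $np(1-p)^{n-1} \leq 1-(1-p)^n$ to dominate the residual term by a $p\|\Delta h^{(1)}\|$ contribution, and invoking the non-uniform corollary of Subsection~2.3 together with a Cauchy--Schwarz step that replaces $\E[1/(|\tilde W|+1)]$ by $1/(0.5\vee\sqrt{\operatorname{Var}(|\tilde W|)}) = 1/(0.5\vee\sqrt{(n-1)p(1-p)})$ produces the displayed second summand. The bound \Ref{bound2} is obtained analogously, but using the sharper $\L>m+2$ branches of Theorems~\ref{firstdiff} and \ref{seconddiff}.

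The main obstacle lies in verifying the generator identity above. The indicator $\ind_{|\xi|>m}$ in $\cA^{(1)}$ breaks the usual immigration-death cancellation precisely on the atomic event $\{|W|=1\} = \{|\tilde W_i|=0,\, X_i=1\}$; the disjoint decomposition of $\ind_A$ is exactly what is needed to route these missing terms into the additive residual $np^2(1-p)^{n-1}\int\Delta h_f^{(1)}(\emptyset;\alpha)\,d\alpha$, and it is this residual that forces us to carry both a $\|\Delta^2 h^{(1)}\|$ and a $\|\Delta h^{(1)}\|$ factor through the estimate. A secondary technical point is obtaining the denominator $0.5\vee\sqrt{(n-1)p(1-p)}$ rather than the cruder $np$; this requires the $1/(|\xi|+1)$-scaling of the non-uniform Stein factors in Subsection~2.3 together with a second-moment estimate that trades the mean of $|\tilde W|$ for its standard deviation.
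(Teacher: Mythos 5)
Your plan agrees with the paper up to the triangle inequality, the common prefactor $1/(1-(1-p)^n)$, the Xia--Zhang bound for the Bernoulli--binomial gap, and the manipulation that isolates a second-difference of $h$ applied to $W^i = W - X_i\d_{T_i}$. Your ``disjoint decomposition'' of the indicator is a legitimate alternative to the paper's device of normalising $h(\emptyset):=\E h(\d_{S})$; it produces a residual on $\{|W^i|=0,\,X_i=1\}$, but note the residual is $\E[h(\d_{T_1}+\d_{S_1})-h(\d_{T_1})] = \E\,\Delta h(\d_{T_1};S_1)$, not $\int\Delta h(\emptyset;\alpha)\,d\alpha$: on that event $W=\d_{T_1}$, not $\emptyset$. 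This is a small but real error in your identity.

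The genuine gap is in how you obtain the denominator $0.5\vee\sqrt{(n-1)p(1-p)}$. In the paper this factor does \emph{not} come from the Stein factors at all; it comes from the \emph{smoothness of the binomial distribution}. The paper reduces the Stein term to a one-dimensional functional $g(k)=\E[h(W+\d_{S_0})-h(W)\mid|W|=k]$, rewrites the Stein expression as $\frac{np^2}{\Pr(|W|\geq1)}\,\E[g(|W^1|+1)-g(|W^1|)]$, bounds this by $\frac{2np^2}{\Pr(|W|\geq1)}\|g\|\,d_{TV}(\cL(|W^1|+1),\cL(|W^1|))$, and then invokes Barbour--Jensen's Lemma~1, $d_{TV}(\cL(|W^1|+1),\cL(|W^1|))\leq 1\wedge\frac{1}{2\sqrt{(n-1)p(1-p)}}$, together with $\|g\|\leq\|\D h^{(1)}\|$. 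Your proposal instead tries to extract this factor via ``Cauchy--Schwarz replacing $\E[1/(|\tilde W|+1)]$ by $1/(0.5\vee\sqrt{\operatorname{Var}(|\tilde W|)})$.'' No such inequality is available: for a concentrated nonnegative integer variable $X$, $\E[1/(X+1)]$ behaves like $1/\E[X]$, which is much smaller than $1/\mathrm{sd}(X)$ when the mean is large, and Cauchy--Schwarz/Jensen go in the opposite direction to what you would need. The non-uniform Stein factors $L_1,L_2$ of Subsection~2.3 scale like $1/(|\xi|\wedge\L)$ and cannot by themselves manufacture a $1/\mathrm{sd}$ factor; the standard-deviation scaling is a \emph{local-limit} phenomenon for the binomial, not a Stein-factor phenomenon. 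Without the reduction to $g$ and the Barbour--Jensen TV bound (or an equivalent smoothing estimate), your argument does not reach the stated bound. You also end up carrying $\|\D^2 h^{(1)}\|$ (and hence the $K_2$ term) where the final bounds use only $\|\D h^{(1)}\|$, which would change the constants and the $\log$-structure of the result.
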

\begin{proof}
Xia \& Zhang~\cite{XZ08} have shown that,
\[d_2(\cL(\Xi), \cL(W)) \leq \left(\frac{1}{2n} + \frac{p}{2} \right) \wedge \frac{1}{\sqrt{3np}}.\]
Noting that $|\Xi| = |W|$, $\Pr(|\Xi| \geq 1) = \Pr(|W| \geq 1)$, and $d_2$ and $\bar{d}_2$ are the same given both configurations have the same number of particles, hence
\[\bar{d}_2(\cL(\Xi^\resone), \cL(W^\resone)) \leq \frac{1}{1-(1-p)^n} \left[\left(\frac{1}{2n} + \frac{p}{2} \right) \wedge \frac{1}{\sqrt{3np}}\right].\]
We now need to find a bound for $\bar{d}_2(\cL(W^\resone), \Po^\resone(\Lb))$, where we set $\Lb(dx) = np\ dx$. To this end, we need to find a bound for $|\E \cA^\resone h(W^\resone)|$, where
\begin{align}
\E \cA^\resone h(W^\resone) &= \E \left\{\int_0^1 \left[ h(W^\resone + \d_\a) - h(W^\resone)\right] \Lb(d\a)\right\}\notag\\
	&\ \ - \E \left\{\int_0^1 \left[ h(W^\resone) - h(W^\resone - \d_\a) \right] W^\resone(d\a) \ind_{|W^\resone| \geq 2}\right\}.\label{CPPex1}
\end{align}
We re-write the first term in \Ref{CPPex1} as
\begin{align}
&\E \left\{\int_0^1 \left[ h(W^\resone + \d_\a) - h(W^\resone)\right] \Lb(d\a)\right\}\notag\\
 &= \frac{p}{\Pr(|W| \geq 1)} \sum_{i=1}^n  \E \left\{ \left[ h(W + \d_{S_i}) - h(W) \right] \ind_{|W| \geq 1} \right\}\label{CPPex2},
\end{align}
where the $S_i$ are i.i.d.\ uniform random variables on $[0,1]$. For the second term of \Ref{CPPex1},
\begin{align}
&\E \left\{ \int_0^1 \left[ h(W^\resone) - h(W^\resone - \d_\a) \right] W^\resone(d\a) \ind_{|W^\resone| \geq 2}\right\}\notag\\
	&\ \  =\frac{1}{\Pr(|W| \geq 1)} \sum_{i=1}^n \E \left\{ \left[ h(W) - h(W-\d_{T_i}) \right] X_i \ind_{|W| \geq 2} \right\}\notag\\
	&\ \ = \frac{p}{\Pr(|W| \geq 1)} \sum_{i=1}^n \E \left\{ \left[ h(W^i + \d_{T_i}) - h(W^i) \right] \ind_{|W^i| \geq 1} \right\}\label{CPPex3},
\end{align}
where $W^i = W - X_i\d_{T_i}$, and the last equality is true as $W$ and $W - \d_{T_i}$ are only different if $X_i=1$ which occurs with probability $p$. Note that since $W^i$ and $T_i$ are independent, we can replace $T_i$ with $S_i$ without changing the value of the expectation. We can now combine~\Ref{CPPex1}, \Ref{CPPex2} and \Ref{CPPex3} to give
\begin{align}
\left|\E\cA^\resone h(W^\resone)\right|  &= \frac{p}{\Pr(|W| \geq 1)} \Bigg|\sum_{i=1}^n \E \Big[ (h(W + \d_{S_i}) - h(W))\ind_{|W| \geq 1}\notag\\
	&\ \ \ \  - (h(W^i + \d_{S_i}) - h(W^i)) \ind_{|W^i| \geq 1} \Big] \Bigg|\label{CPPex4}\\
	&=\frac{p^2}{\Pr(|W| \geq 1)} \Bigg|\sum_{i=1}^n \E \Big[ (h(W^i + \d_{T_i} + \d_{S_i}) - h(W^i + \d_{T_i}))\notag\\
	&\ \ \ \  - (h(W^i + \d_{S_i}) - h(W^i)) \ind_{|W^i| \geq 1} \Big] \Bigg|\label{tada}\\
	&\leq \frac{np^2}{\Pr(|W| \geq 1)} \| \D^2 h^\resone \|,\notag
\end{align}
where in the last equality we used the fact that the summand is non-zero only if $X_1 = 1$, which occurs with probability $p$, and the indicator variable in \Ref{tada} can be dropped off by noting that we can arbitrarily set $h(\emptyset) = \E h(\d_{S_i})$.

As an alternative to this bound, we can utilise the approach in Section~4.2 of Schuhmacher \& Xia~\cite{SX08}, and use bounds of the first difference of $h$. Following similar arguments to before, using \Ref{CPPex4} we can show that
\begin{align*}
\left|\E\cA^\resone h(W^\resone)\right|  &= \frac{np}{\Pr(|W| \geq 1)} \Bigg| \E \Big[ (h(W + \d_{S_0}) - h(W))\ind_{|W| \geq 1}\\
	&\ \ \ \  - (h(W^1 + \d_{S_0}) - h(W^1)) \ind_{|W^1| \geq 1} \Big] \Bigg|,
\end{align*}
where $S_0$ independent of $W$ and is uniform on $[0,1]$. Define
\[ g(i) = \E\big[ h(W + \d_{S_0}) - h(W) \big| |W| = i\big] = \E \left[ h\left(\sum_{j=1}^i \d_{T_j}+ \d_{S_0} \right) - h\left(\sum_{j=1}^i \d_{T_j} \right)\right].\]
Then
\begin{align}
|\E \cA^\resone h(W^\resone)| &= \frac{np}{\Pr(|W| \geq 1)} \left|\E [ g(|W|) - g(|W^1|) ] \right|\notag\\
	&= \frac{np^2}{\Pr(|W| \geq 1)} \left|\E [g(|W^1| + 1) - g(|W^1|) ] \right|\notag\\
	&\leq \frac{2np^2}{\Pr(|W| \geq 1)}\| g \| d_{TV}(\cL(|W^1|+1), \cL(|W^1|)),\label{CPPex5}
\end{align}
as similarly to earlier $|W| = |W^1+1|$ with probability $p$, and noting that for any two non-negative integer random variables $Z_1, Z_2$ (see Appendix A.1 of Barbour, Holst \& Janson~\cite{BHJ}),
\[ d_{TV}(\cL(Z_1), \cL(Z_2)) = \frac{1}{2} \sup_{f: \Z_+ \to [-1,1]} | \E f(Z_1) - \E f(Z_2)|.\]
Furthermore, from Lemma 1 of Barbour \& Jensen~\cite{BJ89},
\begin{align}d_{TV}(\cL(|W^1|+1), \cL(|W^1|)) \leq 1 \wedge \frac{1}{2\sqrt{(n-1)p(1-p)}}.\label{CPPex6}\end{align}
Noting that $\|g\| \leq \|\D h^\resone \|$, \Ref{CPPsteineq}, \Ref{CPPex5} and \Ref{CPPex6} imply
\begin{align*}
\bar{d}_2(\cL(W^\resone), \Po^\resone(\Lb)) &= \sup_f| \E \cA h_f(W)|\\
	 &\leq \frac{\| \D h^\resone\| \cdot np^2}{(1-(1-p)^n) \cdot (0.5 \vee \sqrt{(n-1)p(1-p)})}.
\end{align*}
Hence if $\L = np > 3$,
\begin{align*}
\bar{d}_2(\cL(W^\resone), \Po^\resone(\Lb)) \leq \frac{p + np^2(0.95 + \log(np))}{(1-(1-p)^n) (np-1)\cdot (0.5 \vee \sqrt{(n-1)p(1-p)})},
\end{align*}
otherwise,
\begin{align*}
\bar{d}_2(\cL(W^\resone), \Po^\resone(\Lb)) \leq \frac{p + 2p(0.95 + \log^+(np))}{(1-(1-p)^n) \cdot (0.5 \vee \sqrt{(n-1)p(1-p)})}.
\end{align*}
\end{proof}
We assess these bounds under two scenarios. First, when $n$ is fixed and $p \to 0$. In this scenario we would use \Ref{bound1} as our bound. The bound appears to not be particularly good due to the term $\left( \frac{1}{2n} + \frac{p}{2} \right) \wedge \frac{1}{\sqrt{3np}}$. This is because this bound was derived by Xia \& Zhang~\cite{XZ08} with the intention to be used for large $n$. There exists a possibility that the bound \Ref{bound1} can be improved for this scenario by improving the unconditional bound for $d_2(\cL(\Xi), \cL(W))$.

In the case where $p$ is fixed and $n \to \infty$, we use the bound in \Ref{bound2}, and this bound appears to be quite good. The bound is of order 
\[\frac{\sqrt{p}(0.95 + \log(np))}{\sqrt{n(1-p)}},\]
which is asymptotically equivalent to the unconditional bounds. Considering Remark~\ref{sameasuncond2}, this is what we would expect.
\begin{remark}
It would undoubtedly be nice to have an example of a Hawkes point process. However the inherent `independent increment' nature of the Poisson process would make this unsuitable for any non-trivial Hawkes point processes, as a Hawkes point process usually will contain clustering. A more natural choice of point process for approximation would be a conditional compound Poisson point process. This paper is intended as a first step into understanding how one can manipulate generators to approximate conditional point processes. It remains to be seen if it can be generalised to a wider class of point processes by applying different conditions to the immigration or death processes. 
\end{remark}
\newpage
\bibliographystyle{plain}
\bibliography{references}{}
\end{document}